\newtheorem{theorem}{Theorem}[section]
\newtheorem{corollary}[theorem]{Corollary}
\newtheorem{lemma}[theorem]{Lemma}
\newtheorem{proposition}[theorem]{Proposition}
\newtheorem{fact}[theorem]{Fact}
\newtheorem{claim}[theorem]{Claim}
\theoremstyle{definition}
\newtheorem{remark}[theorem]{Remark}
\newtheorem{example}[theorem]{Example}
\newtheorem{conjecture}[theorem]{Conjecture}
\newtheorem*{thma}{Theorem A}
\newtheorem*{thmb}{Theorem B}
\newtheorem*{thmc}{Theorem C}
\newcommand{\T}{\mathbb{T}}
\newcommand{\Z}{\mathbb{Z}}
\newcommand{\R}{\mathbb{R}}
\newcommand{\Q}{\mathbb{Q}}
\def\nbd{neighbourhood}
\def\hull#1{\langle#1\rangle}
\numberwithin{equation}{section}
\newenvironment{revvy}{\color{magenta}}{}
\newenvironment{revfl}{\color{blue}}{}
\def\by{\begin{revvy}}
\def\ey{\end{revvy}}
\def\bfl{\begin{revfl}}
\def\efl{\end{revfl}}
\begin{document}


\baselineskip=17pt


\title[Groups with Dense Subgroups Separable]{Locally Compact Groups with All Dense Subgroups Separable}
\author[D. Peng]{Dekui Peng}\thanks{This work is supported by NSFC grants NO. 12271258 and 12301089, and the Natural Science Foundation of the Jiangsu Higher Education
Institutions of China (Grant NO. 23KJB110017).}
\email{pengdk10@lzu.edu.cn}
\address{Institute of Mathematics, Nanjing Normal University, Nanjing 210046, China}
\keywords{locally compact groups; separable; metrizable}
\subjclass[2020]{Primary 22D05; Secondary  22C05, 54A25}

\begin{abstract}
By a recent result of Juh\'{a}sz and van Mill, a locally compact topological group whose dense subspaces are all separable is metrizable. In this note we investigate the following question: is every locally compact group having all dense subgroups separable also metrizable?
We give an example to show the answer is negative for locally compact abelian groups, thereby showing that one cannot directly generalize the assertion by replacing ``subspaces'' with ``subgroups''. On the other hand, we prove that the answer is positive for compact groups which are either connected or algebraically abelian; and for locally compact groups containing only separable subgroups. As an application, we obtain a necessary condition for metrizability of pronilpotent groups.

\end{abstract}

\maketitle


\section{Introduction}\label{Sec1}
 Let $X$ be a Hausdorff topological space.
The {\em density} $d(X)$
 of $X$ is the minimum cardinal $\tau$ such that $X$ admits a dense subset of size $\tau$. The space $X$ is called {\em separable} if $d(X)$ is countable.
Let $dd(X)$ denote the \emph{double density spectrum} of $X$, which is defined as $$dd(X)=\{d(Y):\mbox{$Y$~is~a~dense~subspace~of~$X$}\}.$$
This notion was introduced in a recent article \cite{JvMSS} of I. Juh\'{a}sz, J. van Mill, L. Soukup, and Z. Szentmikl\'{o}ssy, where the double density spectra of compact spaces were studied (see \cite[Section 3]{JvMSS}). In the same research line,
Juh\'{a}sz and van Mill \cite{JvM}
proved that the double density spectrum $dd(G)$
coincides with the interval $[d(G), w(G)]$ for any locally compact group $G$, where $w(G)$ is the weight of $G$. In particular, this implies that $w(G)=d(G)$ holds whenever $dd(G)=\{d(G)\}$ holds for a locally compact group $G$. This observation leads to the following consequence:

\begin{fact}\label{Prop1}If $G$ is a locally compact group with all dense subspaces separable, then $G$ is metrizable.\end{fact}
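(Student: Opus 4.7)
The plan is to invoke directly the Juh\'asz--van Mill theorem mentioned just before the statement, and then conclude with a standard metrization fact for locally compact groups. The whole argument is essentially a one-line deduction, so the proof sketch is mainly about assembling the three relevant facts in the right order.

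First, I would observe that the hypothesis ``all dense subspaces of $G$ are separable'' translates immediately into a statement about the double density spectrum: every dense subspace $Y \subseteq G$ satisfies $d(Y) \le \aleph_0$, so $dd(G) \subseteq \{\aleph_0\}$. Since $G$ is itself a dense subspace of $G$, the set $dd(G)$ is nonempty and equals $\{\aleph_0\}$; in particular $d(G) = \aleph_0$.

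Next, I would apply the Juh\'asz--van Mill theorem quoted in the preceding paragraph, which asserts that $dd(G) = [d(G), w(G)]$ for any locally compact group $G$. Combined with the previous step, this forces the interval $[\aleph_0, w(G)]$ to be a singleton, and hence $w(G) = \aleph_0$, i.e., $G$ has a countable base.

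Finally, I would close the argument by noting that a locally compact Hausdorff group of countable weight is metrizable: locally compact Hausdorff spaces are regular, and a second countable regular space is metrizable by Urysohn's theorem. Alternatively, one can cite the Birkhoff--Kakutani theorem together with second countability. There is essentially no obstacle here; the only subtlety worth flagging is that the step ``$dd(G) = \{\aleph_0\}$'' uses the hypothesis applied not merely to subgroups but to all dense \emph{subspaces}, which is exactly why this fact fails to generalize when one restricts to subgroups (the very phenomenon that motivates the rest of the paper).
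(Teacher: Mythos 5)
Your proposal is correct and follows exactly the route the paper intends: the hypothesis forces $dd(G)=\{\aleph_0\}$, the Juh\'asz--van Mill theorem $dd(G)=[d(G),w(G)]$ then gives $w(G)=\aleph_0$, and a locally compact (hence regular) second countable group is metrizable. This is precisely the one-line deduction the paper sketches in the paragraph preceding the Fact, so there is nothing to add.
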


For the category of topological groups, a natural question would be to determine if one can replace the word ``subspaces'' by ``subgroups'' in the above result. Namely: is any locally compact group with all dense subgroups separable necessarily a metrizable group?

We shall give a negative answer to this question by presenting two examples:
there exist a non-metrizable locally compact abelian group $G$ and a non-metrizable connected locally compact  group $H$ such that all dense subgroups of $G$ and $H$ are separable (see Examples \ref{this:example:feb62024} and \ref{this:example:feb82024}).

With the goal of finding positive answers, we develop two combinatorial results for compact groups in this note:

\begin{thma}{\em Every compact abelian group $G$ admits a dense subgroup $H$ such that $d(H)=w(G)$.}\end{thma}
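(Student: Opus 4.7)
The plan is to pass to Pontryagin duals and construct $H$ as the subgroup of $G$ generated by a topologically independent family of size $\kappa$, namely a family $\{h_\alpha : \alpha < \kappa\} \subset G$ with $h_\alpha \notin \overline{\hull{h_\beta : \beta \neq \alpha}}$ for every $\alpha$. Topological independence will be enforced by exhibiting, for each $\alpha$, a character $\chi_\alpha$ of $G$ that kills every $h_\beta$ with $\beta \neq \alpha$ but not $h_\alpha$; then a neighbourhood of $h_\alpha$ disjoint from $\overline{\hull{h_\beta : \beta \neq \alpha}}$ supplies the open set needed to block any subset of $H$ of cardinality $<\kappa$ from being dense. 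The case $\kappa = \aleph_0$ is trivial with $H = G$, so assume $\kappa$ uncountable and let $D = \widehat{G}$, a discrete abelian group of cardinality $\kappa$.

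I would first produce a \emph{maximal} $\mathbb{Z}$-independent family $\{\chi_\alpha : \alpha < \kappa\} \subseteq D$, i.e., a family whose $\mathbb{Z}$-span $F$ is the internal direct sum $\bigoplus_\alpha \hull{\chi_\alpha}$ and satisfies $D/F$ torsion. The existence of a $\mathbb{Z}$-independent set of size $\kappa$ comes from a rank argument on $D$: either $D/\mathrm{tor}(D)$ has cardinality $\kappa$, in which case its $\mathbb{Q}$-rank is $\kappa$ and one lifts a $\mathbb{Q}$-basis to $D$; or else some $p$-primary summand $T_p$ of $\mathrm{tor}(D)$ has cardinality $\kappa$, and the identity $|T_p| = |T_p[p]|$ (valid for every uncountable abelian $p$-group, since $|T_p[p^n]| \leq |T_p[p]|^n$) yields $\kappa$ $\mathbb{F}_p$-independent elements in the socle. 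Next I would define $h_\alpha \in G = \mathrm{Hom}(D,\mathbb{T})$ on $F$ by the Kronecker rule $h_\alpha(\chi_\beta) = \delta_{\alpha\beta}\, a_\alpha$, with $a_\alpha \in \mathbb{T}$ any nontrivial element of the same order as $\chi_\alpha$; this is consistent on $F$ thanks to the direct-sum decomposition, and extends (arbitrarily) to all of $D$ because $\mathbb{T}$ is divisible. Set $H := \hull{h_\alpha : \alpha < \kappa}$.

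Density of $H$ in $G$ reduces to checking $H^\perp = 0$ in $D$: for $\chi \in F \setminus \{0\}$ the Kronecker rule directly exhibits $\alpha$ with $h_\alpha(\chi) \neq 0$, while for $\chi \in D \setminus F$ the torsionness of $D/F$ gives the smallest $m \geq 1$ with $m\chi \in F$, and maximality of $F$ rules out $m\chi = 0$ (else $F \cup \{\chi\}$ would still be $\mathbb{Z}$-independent), so $m\chi \in F \setminus \{0\}$ and again some $h_\alpha(\chi) \neq 0$. Topological independence is immediate: $\chi_\alpha$ kills every $h_\beta$ with $\beta \neq \alpha$, hence the closed subgroup they generate, but takes $h_\alpha$ to $a_\alpha \neq 0$. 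Finally, for $D_0 \subseteq H$ with $|D_0| < \kappa$, every element of $D_0$ is a finite $\mathbb{Z}$-combination of the $h_\alpha$'s, so the union of supports has size $<\kappa$ and misses some $\alpha$; then $D_0 \subseteq \hull{h_\beta : \beta \neq \alpha}$, and any neighbourhood of $h_\alpha$ disjoint from $\overline{\hull{h_\beta : \beta \neq \alpha}}$ avoids $D_0$, giving $d(H) \geq \kappa$, and the reverse inequality $d(H) \leq w(H) = w(G) = \kappa$ is automatic. The main obstacle will be the rank-theoretic step producing the maximal $\mathbb{Z}$-independent family of full size $\kappa$, in particular the torsion sub-case, where one must rule out uncountable abelian $p$-groups with socle strictly smaller than the whole group.
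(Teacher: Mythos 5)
Your overall strategy is sound and, modulo one flawed step, it does prove Theorem A; it is also a genuinely different route from the paper's. The paper never builds explicit ``topologically independent'' elements of $G$: it equips $K=\widehat{G}$ with a precompact group topology of pseudocharacter $|K|$ (via the divisible hull, essentiality, and the structure theorem for divisible groups), and then invokes the Comfort--Ross duality twice (their Lemmas on $\psi(G^*)$ versus $d(G)$) to convert maximal pseudocharacter of the dual into maximal density of a dense subgroup of $G$. Your construction replaces all of that machinery with a hands-on annihilator computation: a maximal independent family $\{\chi_\alpha\}$ in $\widehat{G}$, dual elements $h_\alpha$ defined by a Kronecker rule and extended by injectivity of $\T$, density via $H^\perp=0$ using that $\widehat{G}/\hull{\chi_\alpha:\alpha<\kappa}$ is torsion, and the lower bound $d(H)\geq\kappa$ via a support/counting argument. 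The density and lower-bound steps are correct as written (in particular, non-uniqueness of the representation $\sum n_i h_{\alpha_i}$ is harmless since you only need one representation per element of the small set $D_0$). What your approach buys is self-containedness; what it costs is the rank bookkeeping below, which is exactly the point where your argument as stated breaks.

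The genuine gap is the dichotomy you use to produce the independent family: it is \emph{not} true that either $D/\mathrm{tor}(D)$ has cardinality $\kappa$ or some single primary component $T_p$ does. Take $\kappa=\aleph_\omega$ and $D=\bigoplus_{n<\omega}\Z(p_n)^{(\aleph_n)}$ with $p_n$ the $n$-th prime: then $|D|=\aleph_\omega$, $D$ is torsion, and every $T_{p_n}$ has cardinality $\aleph_n<\kappa$. So whenever $w(G)$ has countable cofinality your case split can fail to apply, and the argument stops. The claim you actually need --- that an uncountable abelian group $D$ has an independent subset of cardinality $|D|$, i.e.\ $r(D)=|D|$ --- is nevertheless true, but it must be proved by \emph{aggregating} ranks rather than locating one full-sized piece: one checks $|T_p|\leq\aleph_0\cdot r_p(D)$ for each $p$ (your $|T_p[p^n]|\leq|T_p[p]|^n$ computation), $|D/\mathrm{tor}(D)|\leq\aleph_0\cdot r_0(D)$, and hence $|D|\leq\aleph_0\cdot\bigl(r_0(D)+\sum_p r_p(D)\bigr)$; then one takes the union of a lifted maximal independent family of infinite-order elements with independent families in each socle $T_p[p]$, verifying (as is standard, via Fuchs) that elements of infinite order and elements of distinct prime orders remain jointly independent. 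With that repair, and with the routine observation that a maximal independent family has torsion quotient (which you use for density), the rest of your proof goes through.
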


The question of whether a
 compact abelian group with all dense subgroups separable is metrizable was also considered by F. Lin, Q. Wu and C. Liu \cite{LWL}. 
They proved that a compact abelian group with only separable dense subgroups is metrizable if it satisfies one of the following: (1) torsion; (2) torsion-free; (3) connected (see \cite[Theorems 3.5, 3.6 and 3.8]{LWL}). Of course, all the results can be covered by our Theorem A.
We note that our proof of Theorem A avoids these results. The main tool for our argument is the classical duality theorem for precompact abelian groups of W. Comfort and K. Ross \cite{CR1}.  
 The proof of Theorem A is given in Section \ref{proof:ofthmA}.

Our second result involves connectedness:
\begin{thmb}{\em Every connected compact group $G$ admits a dense subgroup $H$ such that $d(H)=w(G)$.}\end{thmb}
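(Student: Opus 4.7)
The plan is to invoke the structure theorem for connected compact groups and combine Theorem A with a direct-sum construction. If $w(G)\leq\aleph_0$, take $H=G$; thus assume $w(G)=\kappa>\aleph_0$. By the classical structure theorem, $G=Z\cdot G'$, where $Z=Z(G)_0$ is the identity component of the centre (a connected compact abelian group) and $G'$ is the closed commutator subgroup. Moreover, $Z\cap G'$ is a totally disconnected central subgroup, $G'$ is the quotient of a direct product $\prod_{i\in I}L_i$ of simply connected simple compact Lie groups by a totally disconnected central subgroup, and $\kappa=\max(w(Z),|I|)$.

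I would build $H$ as a product $H=K\cdot N$, where $K\leq Z$ and $N\leq G'$ carry the density contributions from the abelian and semisimple parts of $G$ respectively. For $K$: the abelianisation $G^{\mathrm{ab}}=G/G'\cong Z/(Z\cap G')$ is a connected compact abelian group, and since $Z\cap G'$ is totally disconnected, Pontryagin duality gives $w(G^{\mathrm{ab}})=w(Z)$. Applying Theorem A to $G^{\mathrm{ab}}$ yields a dense subgroup $K_1\leq G^{\mathrm{ab}}$ with $d(K_1)=w(Z)$; define $K$ to be the full preimage of $K_1$ in $Z$ under the quotient $Z\to G^{\mathrm{ab}}$. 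Then $K$ is dense in $Z$ and satisfies $d(K)=w(Z)$, and crucially the image of $K$ in $G^{\mathrm{ab}}$ is exactly $K_1$. For $N$: each simple compact Lie group $L_i$ is metrisable, so it admits a countable dense subgroup $F_i$, and I take $N$ to be the image in $G'$ of the finite-support direct sum $M=\bigoplus_{i\in I}F_i\leq\prod_{i\in I}L_i$, which is dense in $\prod_{i\in I}L_i$.

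Once $H=K\cdot N$ is set, density of $H$ in $G=Z\cdot G'$ is immediate, and the upper bound $d(H)\leq w(H)\leq w(G)=\kappa$ is automatic. For the lower bound I would use two projections. First, through the abelianisation $\phi\colon G\to G^{\mathrm{ab}}$, one has $\phi(H)=K_1$ since $\phi$ kills $G'$; hence any dense $E\subseteq H$ projects to a dense subset of $K_1$, forcing $|E|\geq d(K_1)=w(Z)$. Second, for each $i_0\in I$ there is a continuous surjection $\psi_{i_0}\colon G\to L_{i_0}/Z(L_{i_0})$ (factoring through $G/Z$ and the $i_0$-th coordinate projection, well defined because $L_{i_0}/Z(L_{i_0})$ is centreless). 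A support argument of the kind used for dense subgroups of products then shows that if $|E|<|I|$, one can choose preimages of $E$ inside $K\cdot M\subseteq Z\times\prod L_i$ whose $M$-supports avoid some $i_0\in I$, so $\psi_{i_0}(E)=\{e\}$ while $\psi_{i_0}(H)$ is dense in the non-trivial group $L_{i_0}/Z(L_{i_0})$, a contradiction. Together these give $d(H)\geq\max(w(Z),|I|)=\kappa$.

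The main technical obstacle is controlling density across quotients: the central kernel $Z\cap G'$ and the totally disconnected central subgroup defining $G'$ from $\prod L_i$ could in principle collapse the density of $K$ or $M$ upon descent to $G$. This is handled by applying Theorem A on $G^{\mathrm{ab}}$ itself rather than on $Z$ and lifting back, and by using the quotient $L_{i_0}/Z(L_{i_0})$ (rather than $L_{i_0}$ itself) as a target for the support argument; total disconnectedness of the relevant kernels is exactly what ensures, via Pontryagin duality in the first case and simplicity of $L_{i_0}/Z(L_{i_0})$ in the second, that no weight is lost.
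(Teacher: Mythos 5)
Your proposal is correct and rests on the same ingredients as the paper's own proof: the Levi--Mal'cev decomposition $G=Z_0(G)G'$ into the connected centre and the semisimple part, Theorem A applied to the abelian piece, and a finite-support argument on the product of simple Lie factors. The only difference is organizational --- the paper argues by contraposition via weight arithmetic, while you exhibit the dense subgroup $H=K\cdot N$ explicitly and bound its density from below by projecting onto $G/G'$ and onto the centreless factors $L_{i_0}/Z(L_{i_0})$ --- and both routes are sound.
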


 The proof of Theorem B is reserved for Section \ref{proof:ofthmB}. By combining
Theorems A and B
one gets the following:

\begin{corollary}Let $G$ be a compact group with all dense subgroups separable. If $G$ is abelian or connected, then $G$ is metrizable.\end{corollary}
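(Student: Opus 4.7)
The plan is to derive this as a direct consequence of Theorems A and B, using the standard fact that a compact Hausdorff group is metrizable if and only if its weight is countable (equivalently, it is second countable). So the proof reduces to controlling $w(G)$.

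First I would split into the two cases in the hypothesis. If $G$ is compact and abelian, apply Theorem A to obtain a dense subgroup $H\leq G$ with $d(H)=w(G)$; if $G$ is compact and connected, apply Theorem B to obtain the analogous dense subgroup. In either case the dense subgroup $H$ witnesses one value of the density spectrum over subgroups, namely $w(G)$.

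Next I would invoke the hypothesis that every dense subgroup of $G$ is separable. Applied to $H$, this gives $d(H)\leq\aleph_0$, and combined with $d(H)=w(G)$ we conclude $w(G)\leq\aleph_0$. Since $G$ is compact Hausdorff, countable weight is equivalent to metrizability, so $G$ is metrizable.

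There is essentially no obstacle at the corollary level; all the real work lies in Theorems A and B, which produce a dense subgroup whose density attains the weight of $G$. The corollary is then a one-line combinatorial consequence: separability of every dense subgroup forces an upper bound of $\aleph_0$ on a quantity that Theorems A and B force to equal $w(G)$.
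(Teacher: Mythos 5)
Your proof is correct and is exactly the argument the paper intends: the corollary is stated as an immediate combination of Theorems A and B, and your reduction (the dense subgroup $H$ with $d(H)=w(G)$ must be separable, forcing $w(G)\leq\omega$, hence metrizability of the compact group) is the standard one-line deduction.
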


We provide two examples showing that Theorems A and B do not hold in the realm of locally compact
 groups (see Examples \ref{this:example:feb62024} and \ref{this:example:feb82024}).
 However, we are still able to apply Theorem B 
in the case of a locally compact group $G$ satisfying $w(G) > 2^\omega$ (see Proposition \ref{this:prop:feb82024}). This result is achieved through a delicate application of the Local Splitting Theorem of Iwasawa \cite{Iwa}.

We suspect the following stronger statement to be true.
\begin{conjecture}Every compact group $G$ has a dense subgroup $H$ such that $d(H)=w(G)$.\end{conjecture}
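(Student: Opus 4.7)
The natural plan is to use the structure theorem for compact groups to reduce the conjecture to the profinite case, combining it with Theorems A and B. Let $G$ be a compact group, $G_0$ its connected component, and $\pi\colon G\to P:=G/G_0$ the projection onto the profinite quotient $P$. Since a neighborhood base at $e\in G$ is obtained by combining those of $G_0$ with preimages of those of $P$, one has $w(G)=\max\{w(G_0),w(P)\}$. Theorem B provides a dense $H_0\le G_0$ with $d(H_0)=w(G_0)$. If the profinite case of the conjecture is granted, so that there is a dense subgroup $F\le P$ with $d(F)=w(P)$, then choosing a set-theoretic section $\tilde F\subseteq G$ of $F$ under $\pi$ and setting $H:=\langle H_0\cup\tilde F\rangle$ yields a dense subgroup of $G$: indeed $\overline{H_0\cdot\tilde F}\supseteq G_0\cdot\tilde F$, whose image under $\pi$ is dense in $P$. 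A standard argument (using that $H$ induces a dense subgroup of $G_0$ containing $H_0$, and a dense subgroup of $P$ containing $F$) then forces $d(H)=\max\{d(H_0),d(F)\}=w(G)$.

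The remaining core problem is the profinite case: every profinite group $P$ of weight $\kappa$ should admit a dense subgroup of density $\kappa$. When the abelianization $P^{\mathrm{ab}}=P/\overline{[P,P]}$ still has weight $\kappa$, Theorem A supplies a dense subgroup of $P^{\mathrm{ab}}$ of density $\kappa$, which one then pulls back to $P$ and augments with a topological generating set of $\overline{[P,P]}$. The delicate case is when $P^{\mathrm{ab}}$ has strictly smaller weight than $P$, epitomized by perfect profinite groups such as $\prod_{\alpha<\kappa}S_\alpha$ for finite nonabelian simple groups $S_\alpha$. For such a product I would verify directly that the algebraic direct sum $\bigoplus_{\alpha<\kappa}S_\alpha$ is a dense subgroup of density exactly $\kappa$: any dense subset of smaller size has support missing some coordinate $\alpha_0$ and is therefore disjoint from the basic clopen set of tuples whose $\alpha_0$-component lies in $S_{\alpha_0}\setminus\{e\}$.

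The main obstacle is to combine these two ingredients for a general profinite group, which embeds as a closed subgroup of such a product but need not inherit either the direct-sum structure or a large abelianization. My attempt would be a transfinite construction of length $\kappa$ along a strictly decreasing sequence $(N_\xi)_{\xi<\kappa}$ of open normal subgroups of $P$ forming a neighborhood base at the identity: at each stage $\xi$, select a witness $x_\xi\in(\bigcap_{\eta<\xi}N_\eta)\setminus N_\xi$ via a chief-factor analysis of $P/N_\xi$. The subgroup $\langle x_\xi:\xi<\kappa\rangle$ is then dense and of cardinality $\kappa$; proving its density equals $\kappa$ requires showing that any candidate dense subset of smaller size must eventually be trapped inside some $N_\eta$, in the spirit of the direct-sum argument above. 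This last step is the principal hurdle and, I expect, the reason the statement remains open.
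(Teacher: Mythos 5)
This statement is not proved in the paper: it is stated as an open conjecture, and the paper explicitly says that ``the difficulty \ldots is to find a suitable dense subgroup.'' The partial results the paper does prove (Theorems A and B, Proposition \ref{this:prop:feb82024}) all work by contradiction or by pulling back dense subgroups along \emph{quotient} maps, and your proposal does not close the gap that keeps the general statement open.

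There are two concrete problems. First, your reduction to the profinite case rests on the claim that $H:=\langle H_0\cup\tilde F\rangle$ satisfies $d(H)\geq d(H_0)$ because $H\supseteq H_0$. Density is not monotone under passing to a larger subspace, and the paper's own Example \ref{this:example:feb62024}(2) defeats exactly this step: there the compact open subgroup $K=\Z(2)^\tau$ has a dense subgroup $H_0$ of density $\tau>\omega$ by Theorem A, yet every dense subgroup of $G$ containing $H_0$ is separable. The safe direction is the one the paper uses in Proposition \ref{this:prop:feb82024}: if $q\colon G\to Q$ is a quotient map and $F\leq Q$ is dense with $d(F)=\kappa$, then $q^{-1}(F)$ is dense in $G$ and $d(q^{-1}(F))\geq d(q(q^{-1}(F)))=d(F)$, since continuous surjections do not increase density. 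This handles the case $w(G/G_0)=w(G)$, but when $w(G_0)=w(G)>w(G/G_0)$ the component $G_0$ is a closed subgroup rather than a quotient, and no such pullback is available; your construction gives a dense subgroup of $G$ containing a large-density subgroup of $G_0$, which proves nothing about $d(H)$. Second, the profinite case itself is not established: your direct-sum computation is essentially Lemma \ref{Le3} and works only for genuine products of finite simple groups, while a general profinite group is merely a closed subgroup of such a product and need not inherit either the direct-sum structure or a large abelianization. The transfinite selection of witnesses $x_\xi$ produces a dense subgroup of \emph{cardinality} $\kappa$, but the key step --- showing that every subset of size $<\kappa$ fails to be dense --- is precisely the open problem, and you acknowledge as much. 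As it stands the proposal is a plausible programme, not a proof.
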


The difficulty to prove the above conjecture is to find a suitable dense subgroup. However, things may become much easier if one drops the restriction of denseness.

Our last main result in this note is the following:

\begin{thmc}{\em Let $G$ be an infinite locally compact group. Then $w(G)$ is the supremum of the densities of subgroups of $G$.
If the cardinal $w(G)$ has uncountable cofinality, then $G$ has a subgroup satisfying $d(H)=w(G)$. }\end{thmc}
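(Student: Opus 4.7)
First I would dispatch the upper bound: for every subgroup $H\le G$, $d(H)\le w(H)\le w(G)$, so $\sup\{d(H):H\le G\}\le w(G)$. Set $\kappa=w(G)$ and fix a compact symmetric neighbourhood $V$ of the identity; the open $\sigma$-compact subgroup $G_0=\bigcup_{n\in\N}V^n$ satisfies $w(G)=|G:G_0|\cdot w(G_0)$, because $G$ is topologically the disjoint union of $|G:G_0|$ cosets each homeomorphic to $G_0$. Hence $\kappa$ equals at least one of these two factors, and the argument splits accordingly.

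In the first case, $|G:G_0|=\kappa$, I would choose representatives $\{g_\alpha:\alpha<\kappa\}$ of $\kappa$ distinct cosets of $G_0$ and set $H=\langle g_\alpha:\alpha<\kappa\rangle$. Since $G_0$ is open, $H\cap G_0$ is open in $H$, so the coset space $H/(H\cap G_0)$ is discrete, and the images of the $g_\alpha$ realize $\kappa$ distinct points in it. Continuous surjections do not increase density, so $d(H)\ge\kappa$, and combined with the trivial upper bound this gives $d(H)=\kappa$. This case needs no cofinality hypothesis.

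In the second case, $w(G_0)=\kappa$, so $G_0$ is a $\sigma$-compact locally compact group of weight $\kappa$. Here I would apply the Kakutani--Kodaira theorem to obtain a compact normal subgroup $N\trianglelefteq G_0$ with $G_0/N$ metrizable; then $w(N)\cdot\aleph_0=w(G_0)=\kappa$, forcing $w(N)=\kappa$. It suffices to produce a subgroup of the compact group $N$ of density $\kappa$. Splitting on the identity component $N_0$: if $w(N_0)=\kappa$, Theorem B applied to $N_0$ yields a dense subgroup of $N_0$ of density $\kappa$; otherwise $w(N_0)<\kappa$ and $w(N/N_0)=\kappa$, whereupon I would invoke the lemma that any compact group of weight $\kappa$ contains a closed abelian subgroup of weight $\kappa$, after which Theorem A produces a dense subgroup of such an abelian subgroup of density $\kappa$.

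The main obstacle is precisely this abelian-subgroup lemma when $N/N_0$ is a profinite group of weight $\kappa$. My approach would be a transfinite recursion of length $\kappa$ along the inverse-limit presentation of $N/N_0$ by finite quotients, selecting a coherent family of abelian subgroups in the factors (for instance via centres of Sylow $p$-subgroups or via maximal tori lifted from the connected piece) and taking the inverse limit. The uncountable cofinality of $\kappa$ enters at this step: it ensures that at limit stages the resulting closed abelian subgroup has weight genuinely $\kappa$, rather than merely cofinal in $\kappa$. For the supremum claim of part (a) no cofinality hypothesis is needed, since the same construction produces subgroups whose densities are unbounded in $\kappa$, which is exactly what the supremum assertion demands.
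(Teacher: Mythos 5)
Your reduction steps are sound and run parallel to the paper's: the paper also passes to an open subgroup $V$ with $V/G_0$ compact (via van Dantzig rather than $\sigma$-compactness), extracts a compact normal $N$ with $V/N$ a Lie group (via Gleason--Yamabe rather than Kakutani--Kodaira), handles the discrete quotient $G/V$ by counting cosets exactly as in your first case, and reduces everything to producing large-density subgroups of the compact group $N$. The problem is the heart of your compact case. The ``abelian-subgroup lemma'' --- every profinite group of weight $\kappa$ contains a closed abelian subgroup of weight $\kappa$ --- is not a citable standard fact, and your sketch of it does not work: abelian subgroups cannot in general be lifted coherently along an inverse system with surjective connecting maps. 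Already for the quaternion group $Q$ of order $8$, the quotient $Q/Z(Q)\cong \Z(2)\times\Z(2)$ is abelian but no abelian subgroup of $Q$ maps onto it, so at each step of your recursion the chosen abelian subgroups may have to shrink, and after $\omega$ steps (let alone at limit stages of length $\kappa$) the inverse limit can collapse; uncountable cofinality of $\kappa$ does not repair this. Note also that if this lemma were available, part (b) would follow for all compact groups with no cofinality hypothesis at all (closed abelian $A$ of full weight plus Theorem A), which the paper explicitly does not achieve --- a strong hint that the lemma is not within easy reach. Since your part (a) in the second case also routes through this lemma, the gap affects both assertions.

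The paper avoids the issue entirely by never seeking abelian subgroups of $N$ itself. It applies the Countable Layer Theorem to $N$, producing a chain $N=\Omega_0 N\geq \Omega_1 N\geq\cdots$ with $\bigcap_n\Omega_n N=Z_0(N_0)$ abelian and each layer $\Omega_n N/\Omega_{n+1}N$ a product of metrizable simple compact groups. The key observation is that the hypothesis ``all subgroups have density $\le\tau$'' passes to continuous quotients, so Lemma \ref{Le3} bounds the number of simple factors in each layer, Theorem A handles the abelian bottom $Z_0(N_0)$, and subgroups of large density found in a layer or in $Z_0(N_0)$ are pulled back to $N$ as preimages of dense subgroups under quotient maps (which cannot decrease density). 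Uncountable cofinality enters only to guarantee that either $Z_0(N_0)$ or a single layer has full weight $\kappa$. If you replace your abelian-subgroup lemma by this layer decomposition --- finding the abelian (or product-of-simples) structure in a \emph{quotient of a subgroup} and pulling back, rather than in a subgroup of $N$ itself --- the rest of your argument goes through.
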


The proof of Theorem C will be given in Section \ref{proof:ofthmC}. As a consequence of it, we deduce the following:

\begin{corollary}
A locally compact group containing only separable subgroups is metrizable.
\end{corollary}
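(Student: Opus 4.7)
The plan is to derive this corollary as an essentially immediate consequence of Theorem~C. Let $G$ be a locally compact group in which every subgroup is separable. If $G$ is finite it is trivially metrizable, so we may assume $G$ is infinite and apply Theorem~C.

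By hypothesis, $d(H) \leq \aleph_0$ for every subgroup $H$ of $G$, and in particular this holds for $G$ itself. Consequently the supremum of the densities of the subgroups of $G$ is at most $\aleph_0$. Theorem~C identifies this supremum with $w(G)$, so $w(G) \leq \aleph_0$; that is, $G$ has countable weight.

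To conclude I would invoke the standard fact that any Hausdorff topological group of countable weight is metrizable: a countable base at the identity exists (by homogeneity one may obtain it from any countable base of the whole space), so $G$ is first countable, and the Birkhoff--Kakutani theorem then supplies a compatible left-invariant metric. Therefore $G$ is metrizable.

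There is no genuine obstacle left once Theorem~C is available; the entire argument rests on the supremum identity of its first clause, and does not need the second clause about uncountable cofinality. It is worth noting in particular that even if no single subgroup actually attains density $\aleph_0$, the supremum is still a cardinal bounded by $\aleph_0$, which is exactly what is needed to deduce metrizability.
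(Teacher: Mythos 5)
Your proposal is correct and follows exactly the paper's intended derivation: the paper states this corollary as an immediate consequence of Theorem C, using only the first clause (that $w(G)$ equals the supremum of the densities of subgroups) to conclude $w(G)\leq\omega$ and hence metrizability. Your handling of the finite case and your observation that the second clause about uncountable cofinality is not needed are both accurate.
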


To close this paper, we give an application of Theorem A in the realm of the transfinite nilpotent profinite groups in Section \ref{transfinite:section}.

\subsection{Notation and Terminology}

Hereafter, all
topological spaces are assumed to be Hausdorff, unless otherwise stated. For a subset $Y$ of a topological space $X$, $\overline{Y}$ is the closure of $Y$.
Abelian groups are
written additively, including the torus $\T$, so the neutral element of an abelian group will be denoted by $0$.
For an integer $n$ and an abelian group $G$, $nG$ is the subgroup $\{ng:g\in G\}$.
Let $X$ be a subset of a group $G$, the subgroup of $G$ generated by $X$ is denoted by $\hull{X}$.
If $X=\{g\}$, we shall write simply $\hull{g}$ instead of $\hull{\{g\}}$.
We denote by $\Z$ the additive group of integers, by $\Q$ the rationals and by $\R$ the reals.
For any positive integer $n$, $\Z(n)$ is the cyclic group with $n$ elements; $\Z(p^\infty)$ is the Pr\"{u}fer group for a prime $p$.
As usual, $H\leq G$ means that $H$ is a subgroup of $G$. Let $H, K\leq G$, we denote by $[H,K]$ the commutator subgroup, i.e., the subgroup of $G$ generated by $\{hkh^{-1}k^{-1}: h\in H, k\in K\}$.

Let $\{G_\alpha:\alpha<\tau\}$ be a family of topological groups, and denote the identity of each group $G_\alpha$ by $1$.
For any element $g=(g_\alpha)_{\alpha<\tau}\in \prod_{\alpha<\tau} G_\alpha$, the {\em support} of $g$ is the subset $supp(g)=\{\alpha<\tau: g_\alpha\neq 1\}$ of $\tau$.
The set of all elements in $\prod_{\alpha<\tau} G_\alpha$ with finite support is a (dense) subgroup and called the {\em restricted product} or the {\em $\sigma$-product} of $\{G_\alpha: \alpha<\tau\}$, which is also known as the {\em direct sum} (most usually in the abelian case).
This group shall be denoted by $\sigma\prod_{\alpha<\tau}G_\alpha$.
For a group $G$ and a cardinal $\tau$, we shall write $G^{(\tau)}$ for the restricted product of $\tau$ many copies of $G$.

For a locally compact abelian group $G$, the Pontryagin-van Kampen dual group is denoted by $\widehat{G}$, i.e., the group of all continuous homomorphisms of $G$ to $\T$ endowed with the compact-open topology.
A topological group is called {\em precompact} if it is a dense subgroup of a compact group.
Now let $G$ be an abstract (discrete) abelian group.
Then the Comfort-Ross Duality \cite{CR1} yields that every Hausdorff precompact group topology $\sigma$ on $G$ is generated by a dense subgroup $D$ of $\widehat{G}$, i.e., $\sigma$ is the coarsest topology making all $\chi\in D$ continuous.
In this case, $D$ is called the Comfort-Ross dual group of $(G, \sigma)$ and denoted by $(G, \sigma)^*$.
So the underlying group of $(G, \sigma)^*$ is the group of all continuous homomorphisms from $(G, \sigma)$ to $\T$ and the topology of $(G, \sigma)^*$ is the subspace topology inherited from $\widehat{G}$, which coincides with the topology of pointwise convergence.
If $f: G\to H$ is a continuous homomorphism of precompact abelian groups, then there exists a natural continuous homomorphism $f^*:H^*\to G^*, \chi \mapsto \chi\circ f$ (see \cite{Peng}).
The following elementary facts about the Comfort-Ross Duality, which can be found in \cite{Peng}, will be used in this note.

\begin{fact}\label{fac}Let $G$ and $H$ be precompact abelian groups. Then
\begin{itemize}
  \item[(a)] $G\cong (G^*)^*$;
  \item[(b)] $w(G^*)=|G|$ and $w(G)=|G^*|$;
  \item[(c)] an onto continuous homomorphism $f: G\to H$ is a bijection if and only if $f^*:H^*\to G^*$ is a topological group embedding with dense image. Conversely, $f:G\to H$ is a topological monomorphism with $f(G)$ dense in $H$ if and only if $f^*:H^*\to G^*$ is a continuous one-to-one onto homomorphism.
\end{itemize}
\end{fact}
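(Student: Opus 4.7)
The plan is to reduce all three assertions to classical Pontryagin duality applied to the compact completion $\overline{G}$ of a precompact abelian group $(G,\sigma)$. The crucial double description of $G^*$ is as follows: on the one hand, every $\sigma$-continuous character $\chi\colon G\to\T$ extends uniquely to a continuous character of the completion, yielding a canonical isomorphism of abstract groups $G^*\cong\widehat{\overline{G}}$; on the other hand, $G^*$ sits as a dense subgroup of the compact group $\widehat{G_d}$ (the Pontryagin dual of the discretization $G_d$ of $G$), equipped with the subspace topology, which coincides with the topology of pointwise convergence on $G$.

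For (a) I would apply the same identification to $G^*$ itself: its completion is $\widehat{G_d}$, so the underlying abstract group of $(G^*)^*$ equals $\widehat{\widehat{G_d}}$, which by classical Pontryagin duality is canonically identified with $G$. The topology on $(G^*)^*$ is by construction the coarsest group topology making every $\chi\in G^*$ continuous, which is precisely $\sigma$. For (b), density of $G^*$ in $\widehat{G_d}$ yields $w(G^*)=w(\widehat{G_d})$; since the weight of a compact abelian group equals the cardinality of its Pontryagin dual, $w(\widehat{G_d})=|G|$. The identity $w(G)=|G^*|$ then follows by applying the first half of (b) to $G^*$ and invoking (a).

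For (c) I would lift $f$ to the extension $\overline{f}\colon\overline{G}\to\overline{H}$ between compact completions, under which $f^*$ is identified with $\widehat{\overline{f}}$ as a homomorphism of abstract groups. When $f$ is onto, so is $\overline{f}$, hence by classical Pontryagin duality $\widehat{\overline{f}}$ is injective; continuity of $f^*$ together with surjectivity of $f$ shows directly that $f^*$ is a topological embedding of $H^*$ into $G^*$ for the pointwise convergence topologies. For density of the image, I would use the elementary observation that a subgroup of $G^*$ is dense in $G^*$ if and only if it separates the points of $G$ (a Pontryagin annihilator argument inside $\widehat{G_d}$); the image $\overline{f}^*(\widehat{\overline{H}})$ separates $G$ exactly when $\overline{f}$ is injective on $G$, which is equivalent to $f$ being a bijection. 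The second equivalence is handled symmetrically: a topological monomorphism $f$ with $f(G)$ dense in $H$ forces $\overline{f}$ to be a topological isomorphism of compact groups, whence $f^*$ is an abstract group isomorphism, automatically continuous for the pointwise convergence topology. The only real subtlety, rather than a genuine obstacle, is bookkeeping among the distinct topologies carried by the same underlying groups --- in particular, distinguishing the compact-open (discrete) topology on $\widehat{G_d}$ from the strictly coarser pointwise convergence topology on $G^*$.
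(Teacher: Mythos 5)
Your argument is correct, but note that the paper does not prove this statement at all: it is stated as a Fact imported from the Comfort--Ross paper \cite{CR1} and from \cite{Peng}, so there is no in-paper proof to compare against. Your derivation --- identifying $G^*$ both with $\widehat{\overline{G}}$ (via unique extension of characters to the compact completion) and with a dense, point-separating subgroup of $\widehat{G_d}$, and then reducing (a), (b), (c) to classical Pontryagin duality plus the annihilator criterion for density --- is exactly the standard route by which these facts are established in the cited sources, and each step checks out (in particular, density of $G^*$ in $\widehat{G_d}$ is where the standing Hausdorff assumption is used, and the ``embedding'' half of the first equivalence in (c) is automatic once $f$ is onto, so the real content is the density-of-image versus injectivity equivalence, which your separating-points argument handles correctly). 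The only place you compress more than you should is the converse of the second equivalence in (c): from ``$f^*$ is a continuous bijection'' you still need to extract that $f$ is injective (surjectivity of $f^*$ plus Hausdorffness of $G$) and that the topology of $G$ is the initial topology induced by $f$ (because $G^*=f^*(H^*)$ consists of characters factoring through $f$); this is routine but is not literally ``symmetric'' to the forward direction, so it deserves a line of its own.
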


Throughout this note, $\omega$ will be the first infinite (countable) cardinal.
For a set $X$, $|X|$ is the cardinality of $X$. If $X$ is a topological space, $d(X)$ is the density of $X$ and $w(X)$ is its weight.
The  {\em character} of a topological group $G$ is the smallest cardinal of a local base of the identity. We shall denote this cardinal function by $\chi(G)$\footnote{In many references, the term ``character'' is used to denote a homomorphism of an abelian group to the torus $\T$.
For any topological group $G$, the equality $w(G)=d(G)\cdot \chi(G)$ holds (see \cite[Lemma 5.1.7]{ABD} or \cite[Theorem 5.2.5]{AT}).
The following results are well-known but challenging to locate a reference for in academic literature.}

\begin{lemma}\label{3space}
Let $G$ be a topological group and $N$ a closed subgroup. Then $w(G)=w(N)\cdot w(G/N)$, where $G/N$ is the left coset space.
\end{lemma}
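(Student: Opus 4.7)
The plan is to prove the two inequalities $w(N) \cdot w(G/N) \leq w(G)$ and $w(G) \leq w(N) \cdot w(G/N)$ separately. The first is routine: since $N$ is a subspace of $G$, we have $w(N) \leq w(G)$, and the coset map $\pi\colon G \to G/N$ is continuous, surjective, and open (because $\pi^{-1}(\pi(U)) = U \cdot N$ is open whenever $U \subseteq G$ is open), so $w(G/N) \leq w(G)$ as well.

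For the reverse direction, set $\tau = w(N) \cdot w(G/N)$ and invoke the identity $w(G) = d(G) \cdot \chi(G)$ recalled in the paper's footnote; it then suffices to bound both $d(G)$ and $\chi(G)$ by $\tau$. To bound the density, pick dense subsets $D_N \subseteq N$ and $D_{G/N} \subseteq G/N$ of cardinalities at most $w(N)$ and $w(G/N)$ respectively, and lift the latter to $\widetilde D \subseteq G$. For any nonempty open $O \subseteq G$, the set $\pi(O)$ is open by openness of $\pi$, so it meets $D_{G/N}$; this gives $d \in \widetilde D$ with $\pi(d) \in \pi(O)$, whence $d^{-1}O \cap N$ is a nonempty open subset of $N$ meeting $D_N$ at some $d'$, and $d d' \in O$. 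Thus $\widetilde D \cdot D_N$ is dense in $G$ and $d(G) \leq \tau$.

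The main work is bounding $\chi(G)$. Fix local bases $\mathcal{V} = \{V_\alpha\}$ at $e$ in $N$ and $\mathcal{W} = \{W_\beta\}$ at $\pi(e)$ in $G/N$ of sizes $\chi(N)$ and $\chi(G/N)$ respectively. For each $\alpha$ I will choose a lift $\widetilde V_\alpha$ open in $G$ satisfying the crucial refinement $\widetilde V_\alpha^{-1} \widetilde V_\alpha \cap N \subseteq V_\alpha$: starting from any open $U_\alpha \subseteq G$ with $U_\alpha \cap N = V_\alpha$, the continuity of the map $(a,b)\mapsto a^{-1}b$ at $(e,e)$ provides an open $\widetilde V_\alpha \ni e$ with $\widetilde V_\alpha^{-1} \widetilde V_\alpha \subseteq U_\alpha$, and intersecting with $N$ yields the inclusion. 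I then claim that $\mathcal{B} = \{\widetilde V_\alpha \cap \pi^{-1}(W_\beta) : \alpha, \beta\}$ is a local base at $e$ in $G$. Given open $O \ni e$, take symmetric open $O_1$ with $O_1 \cdot O_1 \subseteq O$, pick $\alpha$ with $V_\alpha \subseteq O_1 \cap N$, and use that $\pi(\widetilde V_\alpha \cap O_1)$ is open in $G/N$ and contains $\pi(e)$ to find $\beta$ with $W_\beta \subseteq \pi(\widetilde V_\alpha \cap O_1)$. Any $g \in \widetilde V_\alpha \cap \pi^{-1}(W_\beta)$ then admits $h \in \widetilde V_\alpha \cap O_1$ with $\pi(h) = \pi(g)$; so $h^{-1}g \in \widetilde V_\alpha^{-1}\widetilde V_\alpha \cap N \subseteq V_\alpha \subseteq O_1$, and $g = h \cdot (h^{-1}g) \in O_1 \cdot O_1 \subseteq O$.

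The main obstacle is the character estimate. The lifts $\widetilde V_\alpha$ could a priori be large in directions transverse to $N$, so intersecting with $\pi^{-1}(W_\beta)$ does not on its own keep $\widetilde V_\alpha \cap \pi^{-1}(W_\beta)$ inside a prescribed $O$. The refinement $\widetilde V_\alpha^{-1}\widetilde V_\alpha \cap N \subseteq V_\alpha$ is precisely the compatibility condition that makes the factorization $g = h \cdot (h^{-1}g)$ succeed: the base part $h$ is controlled by $\pi^{-1}(W_\beta)$ together with openness of $\pi$, while the fiber part $h^{-1}g \in N$ is controlled by $V_\alpha$, and the two estimates combine multiplicatively.
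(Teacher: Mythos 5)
Your proof is correct and follows essentially the same route as the paper's: both reduce the nontrivial inequality to $w(G)=d(G)\cdot\chi(G)$, and both establish $\chi(G)\le\chi(N)\cdot\chi(G/N)$ by building a local base from products of a ``fiber'' neighbourhood and a ``base'' neighbourhood, with the same factorization trick $g=h\cdot(h^{-1}g)$ controlled by a condition of the form $\widetilde V^{-1}\widetilde V\cap N\subseteq V$ (the paper achieves this with symmetric $V'$ satisfying $V'V'\subseteq V$). The only cosmetic difference is that you prove $d(G)\le d(N)\cdot d(G/N)$ directly via the set $\widetilde D\cdot D_N$, where the paper cites \cite[5.38(f)]{HR}; your argument for that step is also correct.
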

\begin{proof}
When $G$ is finite, this is trivial. Suppose now $G$ is infinite.
The direction $w(G)\geq w(N)\cdot w(G/N)$ is evident. 
According to  \cite[5.38(f)]{HR}, we know that $$d(G)\leq d(N)\cdot d(G/N)\leq w(N)\cdot w(G/N).$$
Thus, if we can prove that $\chi(G)\leq \chi(N)\cdot \chi(G/N)$, then by $\chi(N)\cdot \chi(G/N)\leq w(N)\cdot w(G/N)$ and the equality mentioned above we would have that
$w(G)=d(G)\cdot \chi(G)\leq w(N)\cdot w(G/N).$

The following is a modification of the proof of \cite[Theorem 1.5.20]{AT}, in where the desired inequality for the case $\chi(N)=\chi(G/N)=\omega$ is proved.

Let $\mathcal{B}_1$ be a family of symmetric\footnote{a subset $A$ in a group is called {\em symmetric} if $A=A^{-1}$}  identity \nbd s in $G$ such that (1) $|\mathcal{B}_1|=\chi(N)$; (2) for any $V\in \mathcal{B}_1$, there exists $V'\in \mathcal{B}_1$ with $V'V'\subseteq V$; and (3) $\{V\cap N: V\in \mathcal{B}_1\}$ is a local base at the identity in $N$.
Fix another family $\mathcal{B}_2$ of open \nbd s of the identity in $G$ of cardinality coinciding with $w(G/N)$ such that $\{WN/N: W\in \mathcal{B}_2\}$ is a local base of the point $N$ in the quotient space $G/N$.

We will see that $\mathcal{B}:=\{V\cap W: V\in \mathcal{B}_1, W\in \mathcal{B}_2\}$ is a local base at the identity in $G$, thus ensuring that $\chi(G)\leq \chi(N)\cdot \chi(G/N)$.
For any identity \nbd~ $U$ of $G$, take an identity \nbd~ $U'$ with $U'U'\subseteq U$.
Let $V, V'\in \mathcal{B}_1$ such that $V\cap N\subseteq U'$ and $V'V'\subseteq V$.
Since $V'\cap U'$ is of course open in $G$, by the chosen of $\mathcal{B}_2$, we can find $W\in \mathcal{B}_2$ such that $W\subseteq (V'\cap U')N$.
Now assume that $x\in V'\cap W$. Then there exist $y\in V'\cap U'$ and $z\in N$ with $x=yz$.
So $z=y^{-1}x\in V'V'\subseteq V$.
As $z$ is also in $N$, we have $z\in V\cap N\subseteq U'$.
Hence we have $x=yz\in U'U'\subseteq U$, implying that $V'\cap W\subseteq U$.
This inclusion completes the proof.
\end{proof}
For an infinite cardinal number $\tau$, a subset of a topological space $X$ is called a {\em $G_\tau$-set} if it is the intersection of (at most) $\tau$ many open subsets of $X$.
In frequently used terminology, $G_\omega$-sets are known as $G_\delta$-sets.
The {\em pseudocharacter} of a topological group $G$, which is denoted by $\psi(G)$, is the least infinite cardinal $\tau$ such that the singleton $\{1_G\}$ is a $G_\tau$-set.

\section{two inequalities for pseudocharacters}

In this section we consider two inequalities for pseudocharacters that will be of use in the proof of Theorem A. The first lemma is a part of folklore, and we include its proof for convenience of the reader:

\begin{lemma}\label{Le0}Let $G$ be a precompact abelian group and $\tau$ an infinite cardinal.
Then, $d(G)\leq \tau$ holds if and only if $\{0\}$ is a $G_\tau$-set in $G^*$ (i.e., $\psi(G^*)\leq \tau$).\end{lemma}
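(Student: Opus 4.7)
The argument I have in mind is a direct application of Comfort-Ross duality (Fact \ref{fac}), manipulating basic neighborhoods of $0$ in the pointwise convergence topology of $G^*$. Recall that these have the form $W(F,V)=\{\chi\in G^*:\chi(F)\subseteq V\}$ with $F\subseteq G$ finite and $V$ a neighborhood of $0\in\T$.

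For the direction $d(G)\leq\tau\Rightarrow\psi(G^*)\leq\tau$, I would fix a dense set $D\subseteq G$ with $|D|\leq\tau$. For each $d\in D$, the evaluation map $\chi\mapsto\chi(d)$ is a continuous map $G^*\to\T$ and $\{0\}\subseteq\T$ is a $G_\delta$-set, so the set $E_d=\{\chi\in G^*:\chi(d)=0\}$ is a $G_\delta$-subset of $G^*$. By continuity of every $\chi\in G^*$ together with $\overline{D}=G$, the condition $\chi|_D=0$ forces $\chi=0$, so
$$\{0\}=\bigcap_{d\in D}E_d$$
exhibits $\{0\}$ as the intersection of $|D|\cdot\omega\leq\tau$ open sets, i.e., a $G_\tau$-set.

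For the converse, my plan is to write $\{0\}=\bigcap_{\alpha<\tau}U_\alpha$ with each $U_\alpha$ open in $G^*$, refine to basic $U_\alpha=W(F_\alpha,V_\alpha)$ with $F_\alpha\subseteq G$ finite, and set $D=\bigcup_{\alpha<\tau}F_\alpha$ and $K=\hull{D}$. Then $|K|\leq\tau$, and the remaining task is to show that $K$ is dense in $G$. The key observation is that any $\chi\in G^*$ vanishing on $D$ satisfies $\chi(F_\alpha)=\{0\}\subseteq V_\alpha$, hence lies in $U_\alpha$ for every $\alpha$, so $\chi=0$. This says the restriction map $\iota^*\colon G^*\to K^*$ induced by the inclusion $\iota\colon K\hookrightarrow G$ is injective, which by Fact \ref{fac}(c) is equivalent to $\iota$ having dense image; so $d(G)\leq|K|\leq\tau$.

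The only genuinely non-elementary step is the invocation of Fact \ref{fac}(c) in the reverse direction, which converts the algebraic statement ``no nonzero character of $G$ vanishes on $K$'' into the topological conclusion ``$K$ is dense in $G$''. Everything else is a routine unwinding of the pointwise-convergence topology on $G^*$ and of the definition of a $G_\tau$-set.
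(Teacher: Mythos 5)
Your proof is correct, but it takes a more elementary route than the paper in both directions. For the forward implication the paper dualizes: it picks a dense subgroup $H\leq G$ with $|H|\leq\tau$, uses $w(H^*)=|H|$ from Fact \ref{fac}(b) to see that $\{0\}$ is a $G_\tau$-set in $H^*$, and pulls this back along the injection $j^*\colon G^*\to H^*$; you instead write $\{0\}$ directly as $\bigcap_{d\in D}\{\chi:\chi(d)=0\}$ and only use that evaluations are continuous and that a continuous character vanishing on a dense set vanishes everywhere -- this works and does not even require $D$ to be a subgroup. For the converse the paper assembles, for each $U_\alpha$, finitely many characters of $G^*$ whose kernels sit inside $U_\alpha$, forms the diagonal product $\chi\colon G^*\to\T^\tau$, and dualizes once more to land a dense subgroup of $(G^*)^*\cong G$ of size $\leq\tau$; you shortcut this by harvesting the finite supports $F_\alpha$ of basic neighbourhoods and showing that no nonzero character annihilates $K=\hull{\bigcup_\alpha F_\alpha}$. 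The two converses are really the same idea (the paper's $\chi_{\beta_i}$ are, via $(G^*)^*\cong G$, evaluations at points of $G$), but yours avoids the double dualization. One small imprecision: Fact \ref{fac}(c) in the backward direction requires $\iota^*\colon G^*\to K^*$ to be one-to-one \emph{and onto}, and you only verify injectivity; surjectivity holds (every continuous character of a subgroup of a precompact abelian group extends to the ambient group, via the compact completion and divisibility of $\T$), but you should either note this or argue density directly -- if $\overline{K}\neq G$ then the nontrivial precompact group $G/\overline{K}$ carries a nontrivial continuous character, which pulls back to a nonzero element of $G^*$ vanishing on $K$, contradicting what you proved.
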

\begin{proof}
First assume that $d(G)\leq \tau$.
Then, $G$ admits a dense subgroup $H$ of cardinality $\leq \tau$. We denote by $j: H\to G$ the corresponding identity embedding.
Fact \ref{fac}(b) yields that $w(H^*)\leq\tau$; so $\{0\}$ is a $G_\tau$-set in $H^*$.
Moreover, by Fact \ref{fac}(c), $j^*: G^*\to H^*$ is a monomorphism.
Hence, $\{0\}$ is also a $G_\tau$-set in $G^*$.

For the converse, assume $\{0\}$ is a $G_\tau$-set in $G^*$. Then, there exists a family $\{U_\alpha:\alpha<\tau\}$ of open sets in $G^*$ such that $\bigcap_{\alpha<\tau} U_\alpha=\{0\}$.
For each $\alpha<\tau$, one may take a finite number of continuous homomorphism $\chi_{\beta_1}, \chi_{\beta_2},...,\chi_{\beta_n}$ of $G^*$ to $\T$ such that
$$\bigcap_{i=1}^n\ker \chi_{\beta_i}\subseteq U_\alpha.$$
We then define the diagonal product $\chi_\alpha:=\Delta_{i=1}^n \chi_{\beta_i}$, which maps $G^*$ into $\T^n$ with the kernel contained in $U_\alpha$.
Hence, the diagonal product $\chi:=\Delta_{\alpha<\tau} \chi_\alpha: G^*\to \T^\tau$ is injective, implying that
$w(\chi(G^*))\leq \tau$.
Let $H=\chi(G^*)$ be the image of $G$ under $\chi$ as a topological subgroup of $\T^\tau$.
Then, $\chi^*: H^*\to (G^*)^*\cong G$ is a dense embedding by Fact \ref{fac}(c).
Item (b) of Fact \ref{fac} now implies that $\chi^*(H^*)$ is a dense subgroup of $G$ of cardinality less than or equal to $\tau$, that is, $d(G)\leq \tau$.
\end{proof}


 For the next lemma,
for every subset $X$ of a cardinal $\kappa$, we shall identify the subproduct $\prod_{\alpha\in X}D_\alpha$ with the subset of $\prod_{\alpha<\kappa}D_\alpha$ consisting of all those elements $g$ with $supp(g)\subseteq X$, where each $D_\alpha$ is a group.
In particular, each $D_\alpha$ is a natural subgroup of $\prod_{\alpha<\kappa}D_\alpha$.
\begin{lemma}\label{Le1}Let $G$ be an infinite abelian group. Then $G$ admits a Hausdorff precompact group topology $\mathcal{T}$ such that $\psi(G, \mathcal{T})=|G|$.\end{lemma}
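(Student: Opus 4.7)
The plan is to realise $\mathcal{T}$ as the precompact group topology generated by a family $\Phi\subseteq\widehat{G}$ of exactly $|G|$ continuous characters that separates the points of $G$, but such that no subfamily of strictly smaller cardinality does so. Since $(G,\mathcal{T})^{*}=\langle\Phi\rangle$ and kernels of continuous characters form a subbase of neighbourhoods of the identity in any precompact group, this will immediately force $\psi(G,\mathcal{T})=|G|$. The case $|G|=\omega$ is automatic (any Hausdorff precompact topology witnesses it, since $\psi$ is at least $\omega$ by convention), so I set $\kappa=|G|$ and assume $\kappa>\omega$ for the remainder.

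The decisive step is structural: exhibit an internal direct sum
$$A=\bigoplus_{i<\kappa}\langle e_i\rangle\leq G$$
of $\kappa$ nontrivial cyclic subgroups. This is extracted from the divisible hull $d(G)$, which satisfies $|d(G)|=\kappa$ and decomposes as $\Q^{(\alpha_0)}\oplus\bigoplus_p\Z(p^\infty)^{(\alpha_p)}$, where $\alpha_0$ is the torsion-free rank of $G$ and $\alpha_p=\dim_{\mathbb{F}_p}G[p]$. If $\alpha_0=\kappa$, then lifting a $\Q$-basis of $G\otimes\Q$ gives a $\Z$-independent family in $G$ and hence $A\cong\Z^{(\kappa)}$; if some $\alpha_p=\kappa$, then an $\mathbb{F}_p$-basis of the socle gives $A\cong\Z(p)^{(\kappa)}$; otherwise $\kappa$ must be singular of cofinality $\omega$ with $\sup_p\alpha_p=\kappa$, and one takes $A=\bigoplus_p\Z(p)^{(\alpha_p)}$ inside $\bigoplus_p G[p]$.

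With $A$ in hand, define $\chi_i\colon A\to\T$ by $\chi_i(e_j)=\delta_{ij}\,t_i$, where $t_i\in\T$ has the same order as $e_i$, and extend each to $\widetilde\chi_i\colon G\to\T$ using the divisibility (equivalently, injectivity) of $\T$ as an abelian group. Because $|G/A|\leq\kappa$, choose also a family $\{\varphi_k:k<\kappa\}$ of characters of $G$ vanishing on $A$ whose induced homomorphisms $G/A\to\T$ separate $G/A$. Put $\Phi=\{\widetilde\chi_i:i<\kappa\}\cup\{\varphi_k:k<\kappa\}$ and let $\mathcal{T}$ be the precompact topology it generates. Then $\Phi$ separates $G$ (every nonzero $g\in A$ is detected by $\widetilde\chi_i$ for some $i$ in its support, every $g\notin A$ by some $\varphi_k$), so $\mathcal{T}$ is Hausdorff and $\psi(G,\mathcal{T})\leq|\Phi|=\kappa$. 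For the reverse inequality, any $S\subseteq\langle\Phi\rangle$ with $|S|<\kappa$ is contained in $\langle\{\widetilde\chi_i:i\in I\}\cup\{\varphi_k:k\in K\}\rangle$ for some $I,K\subseteq\kappa$ of cardinality $<\kappa$; picking any $i^{*}\in\kappa\setminus I$, the construction forces $\widetilde\chi_i(e_{i^{*}})=0$ for $i\in I$ and $\varphi_k(e_{i^{*}})=0$ for every $k$, so $e_{i^{*}}\in\bigcap_{\chi\in S}\ker\chi\setminus\{0\}$ and $S$ does not separate. The principal obstacle is thus the structural claim producing $A$, with the singular-cofinality-$\omega$ case being the most delicate to handle; once $A$ is in place, the character-theoretic bookkeeping is routine.
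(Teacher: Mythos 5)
Your argument is correct, but it takes a genuinely different route from the paper's. Both proofs draw on the same structural source---the decomposition of the divisible hull into $\kappa=|G|$ indecomposable summands---but exploit it in opposite directions. The paper works \emph{from the outside}: it topologizes the divisible hull $D=\bigoplus_{\alpha<\kappa}D_\alpha$ by embedding each summand densely into $\T$, shows by a support argument that $\{0\}$ is not a $G_\tau$-set in $D$ for $\tau<\kappa$, reduces to the case where the $G_\tau$-set is a subgroup $P$, and then invokes the \emph{essentiality} of $G$ in $D$ to get $P\cap G\neq\{0\}$; no character of $G$ is ever written down. You work \emph{from the inside}: you pull the rank count back into $G$ to produce a direct sum $A=\bigoplus_{i<\kappa}\hull{e_i}$ of $\kappa$ cyclic subgroups and then build the dual group $\langle\Phi\rangle$ explicitly so that no subfamily of size $<\kappa$ separates points, with $e_{i^*}$ as the concrete witness. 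Your version buys an explicit description of the Comfort--Ross dual and avoids the paper's ``without loss of generality $P$ is a subgroup'' reduction, at the cost of the case analysis on the ranks $\alpha_0,\alpha_p$ (including the cofinality-$\omega$ case) and the use of injectivity of $\T$ to extend the coordinate characters; the paper's essentiality trick makes both unnecessary. One sentence of yours needs repair: kernels of continuous characters do \emph{not} form a subbase of neighbourhoods of $0$ (they are closed $G_\delta$-sets, generally not open). What you actually need, and what is true, is that every neighbourhood of $0$ in the topology generated by $\Phi$ contains a finite intersection $\bigcap_j\ker\chi_j$ with $\chi_j\in\Phi$, so that any $G_\tau$-set containing $0$ contains $\bigcap_{\chi\in S}\ker\chi$ for some $S\subseteq\Phi$ with $|S|\leq\tau$---which is exactly the configuration your $e_{i^*}$ argument refutes for $\tau<\kappa$.
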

\begin{proof}
When $G$ is countable there is nothing need to check. So we assume that $G$ is uncountable.
Let $\kappa=|G|$ and $\tau$ be an infinite cardinal such that $\tau<\kappa$.

Denote by $D$ the {\em divisible hull} (see \cite[Section 24]{Fuc}) of $G$, which is a divisible abelian group containing $G$ as an essential subgroup, that is, $G\cap N\neq \{0\}$ for each $\{0\}\neq N\leq D$.
So one has $|D|=|G|=\kappa$.

By the structure theorem of divisible abelian groups \cite[Theorem 23.1]{Fuc}, $D$ splits into the direct sum of a family $\{D_\alpha:\alpha<\kappa\}$ of subgroups, where each $D_\alpha$ is isomorphic to either $\Q$ or $\Z(p^\infty)$ for some prime $p$.
In both cases, $D_\alpha$ embeds into the circle $\T$ as a dense subgroup.
Thus, one embeds $\Pi:=\prod_{\alpha<\kappa}D_\alpha$ into $\T^\kappa$.
Hence $D=\bigoplus_{\alpha<\kappa}D_\alpha$, which is isomorphic to the restricted product of $\{D_\alpha:\alpha<\kappa\}$, inherits a Hausdorff precompact group topology from $\T^\kappa$.

We claim that if $\mathcal{T}_D$ is the topology $D$ inherits from $\Pi$, then $\{0\}$ is not a $G_\tau$-set in $(D, \mathcal{T}_D)$.
Suppose, on the contrary, that $\{0\}$ is a $G_\tau$-set in $D$.
Then there exists a family $\{U_\alpha:\alpha< \tau\}$ of open subsets of $\Pi$ such that $\bigcap_{\alpha<\tau}U_\alpha\cap D=\{0\}$.
For each $\alpha< \tau$, there exists a finite subset $E_\alpha$ of $\kappa$ such that $\prod_{\beta\in \kappa\setminus E_\alpha}D_\beta\subseteq U_\tau$.
Then $E=\bigcup_{\alpha< \tau}E_\alpha$ is a subset of $\kappa$ of cardinality at most $\tau$ and $\prod_{\alpha\in \kappa\setminus E}D_\alpha\subseteq \bigcap_{\alpha<\tau}U_\alpha$.
Take $\beta\in \kappa\setminus E$. Then $D_\beta\subseteq \bigcap_{\alpha<\tau}U_\alpha\cap D=\{0\}$.
This is a contradiction.

Now let $P$ be any $G_\tau$-set of $D$ containing $0$.
Without loss of generality, one may assume that $P$ is a subgroup\footnote{One may assume  $P=\bigcap_{\alpha\in \tau}U_\alpha$ such that $U_{\alpha+1} U_{\alpha+1}\subseteq U_\alpha$ for all $\alpha<\tau$, where each $U_\alpha$ is a symmetric open neighbourhood of the identity. Then for every ordinal $\beta<\tau$, $\bigcap_{n\in\omega}U_{\beta+n}$ is a subgroup; therefore, $P$ is an intersection of subgroups.}.
By the above assertion, $P\neq \{0\}$.
The essentiality of $G$ in $D$ then yields that $P\cap G\neq \{0\}$.
Thus, $\{0\}$ cannot be a $G_\tau$-set in $G$.
So, with the subspace topology inherited from $D$, $G$ is a precompact abelian group with pseudocharacter equal to $|G|$.
\end{proof}

\section{Theorem A and its limitations} \label{proof:ofthmA}

First, we present the proof of Theorem A.
\begin{proof}[\bf Proof of Theorem A]
Let $G$ be a compact abelian group and $K=\widehat{G}$ the (discrete) Pontryagin dual group of $G$.
According to Lemma \ref{Le1}, $K$ admits a Hausdorff precompact group topology such that the singleton $\{0\}$ is not a $G_\tau$-set in $(K, \mathcal{T})$, for each cardinal $\tau<w(G)$.
The Comfort-Ross dual group $H$ of $(K,\mathcal{T})$ is a dense
 subgroup of $G$ (by Fact \ref{fac}(c)), which has density $\geq w(G)$, by Lemma \ref{Le0}. The inequality $d(H)\leq w(H)$ is
 trivial since $d(H) \leq w(H) \leq w(G)$.
In summary, we have $d(H)=w(G)$.
\end{proof}

One may wonder whether Theorem A can be extended to a wider class of topological abelian groups than that of compact ones.
Two natural generalizations of compactness in topological groups are precompactness and local compactness.
However, the following examples show that Theorem A admits a generalization to neither of these two classes.

\begin{example}
\label{this:example:feb62024}

(1) {\em There exists a non-metrizable precompact abelian group with all dense subgroups separable}.

Recall that a non-trivial topological group is called {\em topologically simple} if it contains exactly two closed normal subgroups.
In \cite{HRT} and \cite{Peng}, it is shown that non-metrizable, topologically simple, precompact abelian groups exist.
Let $G$ be such a group and $H$ be its dense subgroup.
For each $0\neq h\in H$, $\hull{h}$ is a non-trivial subgroup of $G$.
The topological simplicity of $G$ implies that $\hull{h}$ is dense in $G$, so it must also be dense in $H$. Note that this also shows that $H$ is separable.

(2) {\em There exists  a non-metrizable locally compact abelian group with all dense subgroups separable}.

Let $\tau$ be an uncountable cardinal such that $\tau\leq 2^\omega$ and consider the group $K=\Z(2)^\tau$ with the usual product topology.
By the
Hewitt-Marczewski-Pondiczery Theorem \cite[Theorem 2.3.15]{Eng},
$K$ is separable.
Let $D$ be a countable dense subgroup of $K$.
Algebraically, $D$ is a direct summand of $K$, since it is a subspace of the linear space $K$ over the field $\Z(2)$.
Now write $K=D\oplus E$ for some subgroup $E$ of $K$.

Let us take $$G=\Z(4)^{(\omega)}\oplus E.$$

Then, $K=D\oplus E$ embeds into $G$ as a subgroup in the natural way.
 Indeed, it suffices to consider $D$ as the socle of $\Z(4)^{(\omega)}$, i.e., $D=\{2x: x\in \Z(4)^{(\omega)}\}$, and identify $E$ in $K$ with $E$ in $G$.
Note that $K$ has countable index in $G$.
 Altogether, we obtain

 $$2G=2(\Z(4)^{(\omega)})+2E=D+\{0\}=D.$$
 Let us equip $G$ with the group topology having $K$ as an open compact subgroup.
Note that this topology is Hausdorff and makes $G$ locally compact and non-metrizable.
It remains to check that every dense subgroup $H$ of $G$ is separable.

Since $K$ is open in $G$ and $H$ is dense, we have $G=K+H$.
Then, $$D=2G=2K+2H=\{0\}+2H=2H\leq H.$$

Take any countable subset $P$ of $H$ which contains a representative of every coset of $K$ in $G$.
Then, $H':=\hull{P}+D$ is a countable subgroup of $H$.
By the choice of $P$, for each $g\in G$, there exists $p_g\in P$ such that $p_g+K=g+K$.
So $H'\cap (g+K)$ contains $p_g+D$, which is dense in $g+K$.
In summary, $H'$ contains a dense subset of each coset of $K$ in $G$, so $H'$ is dense in $G$.
\end{example}

\section{Proof of Theorem B} \label{proof:ofthmB}

Let us recall the Levi-Mal'cev Structure Theorem (for connected compact groups).
For any topological group $G$, $Z(G)$ and $G'$ will be the centre and the (abstract) commutator subgroup, respectively.
And $G_0$ (resp. $Z_0(G)$) is the identity component of $G$ (resp. $Z(G)$).

Recall that a connected Lie group is called {\em simple} if it is non-abelian and has no proper closed connected normal subgroup but the trivial group. So if $N$ is a closed normal subgroup of a simple connected Lie group $G$, then $N$ must be discrete and hence contained in the centre of $G$.
\begin{theorem}\label{ThLM}\cite[Theorem 9.24]{HM} Let $G$ be a connected compact group and $\Delta=Z_0(G)\cap G'$. Then $G=Z_0(G)G'$ and there exist a family $\{S_j:j\in J\}$ of simple simply connected compact Lie groups and a closed totally disconnected central subgroup $C$ of $G$ such that
\begin{itemize}
\item[(i)] $G'$ is a quotient group of $\prod_{j\in J}S_j$ (with the kernel contained in $\prod_{j\in J}Z(S_j)$); and
\item[(ii)] $G/C\cong (Z_0(G)/\Delta)\times \prod_{j\in J}(S_j/Z(S_j))$.
\end{itemize}
\end{theorem}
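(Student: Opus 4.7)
The plan is to reduce the assertion to the classical case of compact Lie groups and then pass to the projective limit. The starting point is the standard fact that every connected compact group $G$ is a pro-Lie group: by Peter--Weyl duality one writes $G=\varprojlim_{N} G/N$, where $N$ ranges over the filter of closed normal subgroups of $G$ such that $G/N$ is a compact Lie group. For each such Lie quotient $L=G/N$ I would invoke the classical Levi decomposition at the Lie algebra level, $\mathfrak{l}=\mathfrak{z}(\mathfrak{l})\oplus [\mathfrak{l},\mathfrak{l}]$, which is available because a compact Lie algebra is reductive.

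Exponentiating and using that analytic subgroups of a compact Lie group are closed, I would obtain the group-level equality $L=Z_0(L)\cdot L'$, and the classification of compact semisimple Lie algebras would present $L'$ as a quotient of a finite direct product $\widetilde{S}_{1,N}\times\cdots\times \widetilde{S}_{k_N,N}$ of simple simply connected compact Lie groups modulo a finite central subgroup. A further quotient by a suitable finite central subgroup of $L$ would produce the direct-product decomposition corresponding to (ii) at the Lie level, with $\Delta_L:=Z_0(L)\cap L'$ playing the role of $\Delta$.

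With the Lie case in hand, the final step is to pass to the projective limit. The (possibly transfinite) family $\{S_j:j\in J\}$ is to be assembled by collecting the simple factors appearing across the inverse system, glued coherently along the bonding maps; the closed totally disconnected central subgroup $C$ arises as the inverse limit of the finite central kernels occurring in the Lie quotients. The equality $G=Z_0(G)G'$ then follows by compactness from its Lie-group analogue applied to every image, and the structural isomorphism (ii) survives the limiting process since the natural projections $G/N\to G/N'$ (for $N'\supseteq N$) respect both the centre and the commutator subgroup.

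The main obstacle is precisely this bookkeeping in the limit: every individual Lie quotient contributes only finitely many simple factors, yet the global family $\{S_j\}_{j\in J}$ can have arbitrarily large cardinality, and one must verify that the resulting morphism $\prod_{j\in J}S_j\to G'$ has kernel contained in $\prod_{j\in J}Z(S_j)$ and that (ii) really descends from the Lie level. This compatibility rests on the rigidity of simple simply connected compact Lie groups together with a carefully chosen cofinal filtration of the diagram of Lie quotients; the full execution, in all cardinalities, is the content of \cite[Theorem 9.24]{HM}.
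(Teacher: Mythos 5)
The paper offers no proof of this statement: it is imported verbatim from Hofmann and Morris as \cite[Theorem 9.24]{HM}, and only the subsequent Remark (identifying $C$ with the centre of $G'$) is argued in the text. So there is no in-paper proof to compare yours against; the relevant benchmark is the proof in the cited book, whose overall strategy --- Levi decomposition of the reductive Lie algebra of each compact Lie quotient, followed by passage to the projective limit over the Peter--Weyl system --- is exactly the one you outline. In that sense your plan is the ``right'' one.

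As a proof, however, what you have written is an annotated pointer to the reference rather than an argument, and you concede as much in your last paragraph: the assembly of the global family $\{S_j:j\in J\}$ from the finitely many simple factors of each Lie quotient, the construction of the homomorphism $\prod_{j\in J}S_j\to G'$ with kernel inside $\prod_{j\in J}Z(S_j)$, and the identification of the closed totally disconnected central subgroup $C$ realizing (ii) are precisely the content of the theorem, and all of them are deferred to \cite[Theorem 9.24]{HM}. Two concrete points should also be repaired. First, the supporting claim that ``analytic subgroups of a compact Lie group are closed'' is false: an irrational one-parameter subgroup of $\mathbb{T}^2$ is analytic and dense. What you actually need is the standard fact that the commutator subgroup of a \emph{connected compact} Lie group is closed (being the integral subgroup of the semisimple ideal $[\mathfrak{l},\mathfrak{l}]$), together with the closedness of $Z_0(L)$. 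Second, deducing $G=Z_0(G)G'$ ``by compactness from its Lie-group analogue applied to every image'' requires an extra step: one must check that $Z_0(G)$ and $G'$ map \emph{onto} $Z_0(G/N)$ and $(G/N)'$ for a cofinal family of $N$ (this uses that quotient maps of connected compact groups carry the centre onto the centre, as the paper's own Remark recalls), and then use that $Z_0(G)G'$ is a compact, hence closed, subset meeting every coset of every such $N$. None of this is unfixable, but as it stands the proposal does not constitute an independent proof of the statement.
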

\begin{remark} Note that $C$ is indeed the centre of $G'$.
It can be obtained from the quotient homomorphisms
$$\prod_{j\in J}{S_j}\to G'\to G'/C(\cong \prod_{j\in J}{S_j/Z(S_j)})$$
that $C$ is the image of $\prod_{j\in J}{Z(S_j)}$ under the first quotient mapping.
It now suffices to recall that epimorphisms between connected compact groups map the centre onto the centre of the image (see \cite[Theorem 9.28]{HM}).
 \end{remark}

The following lemma has been broadly accepted. However, I have been unable to find a specific reference to cite. As a result, I provide the proof in this manuscript to ensure its completeness and accuracy.

\begin{lemma}\label{Le3} Let $\kappa$ be an infinite cardinal and $\{H_\alpha:\alpha<\kappa\}$ be a family of non-trivial topological groups.
Then, the restricted product $H$ of $\{H_\alpha:\alpha<\kappa\}$ has density $\geq\kappa$.\end{lemma}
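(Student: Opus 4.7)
The plan is to argue by contradiction: suppose there is a dense subset $D$ of $H$ with $|D|<\kappa$, and produce a non-empty open set in $H$ that misses $D$.

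The trivial case $\kappa=\omega$ is disposed of first: since each $H_\alpha$ is non-trivial and Hausdorff, $H$ contains infinitely many distinct elements (for instance the natural copies of each $H_\alpha$), and so, being an infinite Hausdorff space, satisfies $d(H)\geq\omega$.

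Assume henceforth $\kappa>\omega$. The key observation is that the supports of elements of $D$ cover a small portion of $\kappa$. Concretely, set
\[
S=\bigcup_{d\in D}\mathrm{supp}(d).
\]
Since every element of the restricted product has \emph{finite} support and $|D|<\kappa$, one has $|S|\leq |D|\cdot\omega=\max(|D|,\omega)<\kappa$, using that $\kappa$ is uncountable. Consequently one can pick an index $\beta\in\kappa\setminus S$, and by construction the $\beta$-coordinate of every $d\in D$ equals $1_{H_\beta}$.

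Next I would exploit non-triviality and the Hausdorff property of $H_\beta$ to select an element $h\in H_\beta\setminus\{1\}$ together with an open neighborhood $W$ of $h$ in $H_\beta$ with $1_{H_\beta}\notin W$. The set
\[
U=\{g=(g_\alpha)_{\alpha<\kappa}\in H : g_\beta\in W\}
\]
is open in $H$ (with the subspace topology inherited from $\prod_{\alpha<\kappa}H_\alpha$) and non-empty, as it contains the element whose $\beta$-coordinate is $h$ and whose other coordinates are trivial. On the other hand $U\cap D=\emptyset$, because every $d\in D$ has $d_\beta=1_{H_\beta}\notin W$. This contradicts the density of $D$, establishing $d(H)\geq\kappa$.

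There is no real obstacle in this argument; the heart of the matter is the cardinal arithmetic estimate $|S|<\kappa$, which relies crucially on the finiteness of supports, together with Hausdorffness of the factors to separate $h$ from the identity. Both ingredients are standing assumptions in the paper, so the proof should be quite short.
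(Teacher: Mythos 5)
Your proof is correct and takes essentially the same approach as the paper's: a subset of size $<\kappa$ has supports covering fewer than $\kappa$ indices, so some coordinate $\beta$ is missed entirely, and a basic open set pulling back a neighbourhood of a nontrivial element of $H_\beta$ (away from the identity) witnesses non-density. The paper phrases this contrapositively via the closure of the small set in the full product $\prod_{\alpha<\kappa}H_\alpha$, but the content is identical.
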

\begin{proof}
Let $P=\prod_{\alpha<\kappa}H_\alpha$.
Then $H$ is a dense subgroup of $P$.
Let $C$ be a subset of $H$ of cardinality less than $\kappa$.
Then, the cardinality of $supp(C):=\bigcup_{g\in C}supp(g)$ is less than $\kappa$ as well, since each $supp(g)$ is finite.
Let $\overline{C}$ be the closure of $C$ in $P$, then every $h=(h_\alpha)_{\alpha<\kappa}\in \overline{C}$ satisfies $supp(h)\subseteq supp(C)$.
Indeed, if $\beta\in supp(h)$, then $h_\beta$ is not the identity $1_\beta$ in $H_\beta$.
Therefore, $h$ has some
 open neighbourhood $U$ satisfying that $u_\beta\neq 1_\beta$ (i.e, $\beta\in supp(g)$) holds for each $u=(u_\alpha)_{\alpha<\kappa}\in U.$
We now proved that since $U\cap C$ is not empty.
Therefore, $|supp(\overline{C})|=|supp(C)|<\kappa$ holds, and hence $\overline{C}$ cannot be the whole $P$, i.e., $C$ is neither dense in $P$ nor in $H$.
\end{proof}


\begin{proof}[\bf Proof of Theorem B] We follow the notation of Theorem \ref{ThLM}.
Let $\tau=w(G)$.
By contradiction, assume that every dense subgroup of $G$ has density $<\tau$.
As a consequence, the continuous homomorphic images of $G$, $Z_0(G)/\Delta$ and $\prod_{j\in J}(S_j/Z(S_j))$ also have dense subgroups only of density $<\tau$.
Applying Lemma \ref{Le3}, we deduce that $\kappa_1:=|J|<\tau$.
This implies that
\begin{equation}\label{e1} w(G')\leq \kappa_1\end{equation}
as $G'$ is a quotient group of $\prod_{j\in J}S_j$ and each factor $S_j$ is compact and metrizable.
On the other hand, $Z_0(G)/\Delta$ is abelian.
So Theorem A implies that
\begin{equation}\label{e2}\kappa_2:=w(Z_0(G)/\Delta)<\tau.\end{equation}
As a subgroup of $G'$, $\Delta$ satisfies
\begin{equation}\label{e3}w(\Delta)\leq \kappa_1\end{equation}
by (\ref{e1}).
Combining the inequalities (\ref{e2}) and (\ref{e3}), we obtain from Lemma \ref{3space} that:
\begin{equation}\label{e4}w(Z_0(G))\leq \kappa_1\cdot\kappa_2.\end{equation}
For the last step, recall that $G=Z_0(G)G'$.
So $G$ is a continuous image of $Z_0(G)\times G'$.
Then (\ref{e1}) together with (\ref{e4}) imply that
$$w(G)\leq\kappa_1\cdot\kappa_2<\tau,$$
which is a contradiction.
This completes our proof.
\end{proof}

\section{Limitations of Theorem B}

We now provide an example showing that, similarly to Theorem A, Theorem B admits no generalization to the locally compact case either.
We recall a lemma which is well-known.
For the readers' convenience, we provide the proof.
\begin{lemma}\label{Lenew}Let $G$ be a topological group and $H$ a dense subgroup.
Then for every normal subgroup $N$ of $H$, the closure of $N$ in $G$ is a normal subgroup of $G$.\end{lemma}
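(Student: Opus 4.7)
The plan is to combine the continuity of the conjugation action with the density of $H$ in $G$. For each fixed $g \in G$, the inner automorphism $c_g \colon y \mapsto gyg^{-1}$ is a self-homeomorphism of $G$, so $c_g(\overline{N}) = \overline{c_g(N)} = \overline{gNg^{-1}}$. Hence, to conclude that $\overline{N}$ is normal in $G$, it suffices to verify the inclusion $gNg^{-1} \subseteq \overline{N}$ for every $g \in G$.

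To establish this inclusion I would fix $g \in G$ and $n \in N$ and approximate $g$ by elements of $H$: choose a net $(h_\alpha)$ in $H$ with $h_\alpha \to g$, which is available because $H$ is dense. Joint continuity of multiplication and inversion in the topological group $G$ yields $h_\alpha n h_\alpha^{-1} \to g n g^{-1}$, while the normality of $N$ in $H$ forces $h_\alpha n h_\alpha^{-1} \in N$ for every index $\alpha$. Passing to the limit, $gng^{-1} \in \overline{N}$, as desired.

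Putting the two steps together gives $g\overline{N}g^{-1} = \overline{gNg^{-1}} \subseteq \overline{N}$ for every $g \in G$, i.e., $\overline{N} \trianglelefteq G$. There is no genuine obstacle here; the only point requiring minor care is that $G$ is not assumed to be first countable, so one should work with nets (or equivalently argue by showing that every neighbourhood of $gng^{-1}$ meets $N$) rather than with sequences.
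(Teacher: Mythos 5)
Your proof is correct and rests on the same two ingredients as the paper's: continuity of conjugation and density of $H$ in $G$. The paper packages the argument slightly differently (it considers the single continuous map $\varphi\colon G\times \overline{N}\to G$, $(g,m)\mapsto gmg^{-1}$, and observes that the closed set $\varphi^{-1}(\overline{N})$ contains the dense subset $H\times N$), but this is only a cosmetic difference from your two-step net argument.
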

\begin{proof} Let $M$ be the closure of $N$ in $G$.
The mapping $\varphi: G\times M\to G, (g, m)\mapsto gmg^{-1}$ is continuous.
So $\varphi^{-1}(M)$ is closed in $G\times M$.
Note that $H\times N$ is contained in $\varphi^{-1}(M)$,
and so is its closure $G\times M$.
This implies that $M$ is normal in $G$.\end{proof}

Recall that a topological group is called {\em monothetic} if it contains a dense cyclic subgroup. Any monothetic group is necessarily abelian.
It is well-known that a compact abelian group is monothetic if and only if its Pontryagin dual group is algebraically a subgroup of the circle $\mathbb{T}$ \cite[Theorem 24.32]{HR}.

\begin{example} \label{this:example:feb82024} {\em For every uncountable cardinal $\tau\leq 2^\omega$, there exists a  connected locally compact group $G$ of weight $\tau$ whose dense subgroups are all separable.}

Let $K$ be the Pontryagin dual group of a (discrete) torsion-free abelian group $A$ of cardinality $\tau$. 
Then $A$ is a subgroup of the torus $\T$, hence $K$ is a connected (\cite[Theorem 24.25]{HR}) and compact monothetic (\cite[Theorem 24.32]{HR}) group, and it satisfies $w(K)=\tau$ (\cite[Theorem 24.14]{HR}).
Take any simple connected (necessarily noncompact) real Lie group $L$ with infinite centre. 
Consider $K$ and $L$ as (closed) subgroups of $H:=K\times L$ naturally embedded in $H$.
Let $\pi: H\to K$ be the projection.

\begin{claim}
For every dense subgroup $D$ of $H$, the intersection $D\cap L$ is dense in $L$.
\end{claim}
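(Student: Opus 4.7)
My plan is to combine Lemma \ref{Lenew} with a commutator argument driven by the abelianness of $K$.

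The first step sets the stage. Since $L$ is a direct factor of $H = K \times L$, it is a closed normal subgroup of $H$; hence $D \cap L$ is normal in $D$. Lemma \ref{Lenew} then tells us that $M := \overline{D \cap L}$ is a closed normal subgroup of $H$, and since $L$ is closed in $H$ we in fact have $M \leq L$, so $M$ is a closed normal subgroup of $L$. I would then invoke the simplicity of $L$ as a connected Lie group: its only closed normal subgroups are $L$ itself and discrete subgroups of $Z(L)$. So either $M = L$, in which case the claim is immediate, or $M$ is a discrete subgroup of $Z(L)$.

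The second step rules out the discrete case by a commutator argument. Because $K$ is abelian and commutes elementwise with $L$ in the direct product $H$, every commutator in $H$ lies in $\{1_K\} \times L$, which I identify with $L$. In particular $[D, D] \leq D \cap L \leq M$. If we are in the discrete case then $M \leq Z(L)$, so $[D, D] \leq Z(L)$. Using continuity of the commutator map and the density of $D$ in $H$, every commutator of elements of $H$ is a limit of commutators from $D$, hence $\overline{[D, D]} \supseteq \overline{[H, H]}$; on the other hand $Z(L)$ is closed in $L$ (hence in $H$), so $\overline{[D, D]} \leq Z(L)$. Combining these inclusions gives $\overline{[H, H]} \leq Z(L)$, i.e.\ $\overline{[L, L]} \leq Z(L)$.

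The main obstacle — and the step that closes the argument — is to establish that $\overline{[L, L]} = L$, for then the previous inclusion forces $L \leq Z(L)$, contradicting non-abelianness of $L$. This relies on the (folklore) fact that for a connected Lie group the algebraic commutator subgroup is path-connected; consequently $\overline{[L, L]}$ is a closed connected normal subgroup of $L$, and simplicity of $L$ together with its non-abelianness force $\overline{[L, L]} = L$. This contradicts the discrete alternative, so we must have $M = L$, proving that $D \cap L$ is dense in $L$.
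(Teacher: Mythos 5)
Your proof is correct, and its first half --- applying Lemma \ref{Lenew} to $M=\overline{D\cap L}$, noting $M\le L$, and invoking the simplicity of $L$ to reduce to the case where $M$ is a discrete central subgroup --- is exactly the paper's argument. Where you diverge is in how the central case is refuted. The paper passes to the quotient $H/C$ (where $C=\overline{D\cap L}$, which equals $D\cap L$ by discreteness): since $\pi(D)\cong D/C$ is abelian and dense in $H/C$, the quotient $L/C$ is abelian, and then the continuous commutator map $L\times L\to C$ from a connected space into a discrete group must be trivial, forcing $L$ to be abelian --- a contradiction. You instead stay inside $H$: you observe $[D,D]\le D\cap L\le M\le Z(L)$, take closures using density of $D$ and continuity of the commutator map to get $\overline{[L,L]}=\overline{[H,H]}\le Z(L)$, and contradict this with the fact that $\overline{[L,L]}=L$ for a simple non-abelian connected Lie group (the commutator subgroup of a connected group is connected, so its closure is a closed connected normal subgroup, which by simplicity and non-abelianness must be all of $L$). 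Both endgames are sound; yours uses only the centrality of $M$ and not its discreteness, at the cost of invoking the folklore connectedness of the algebraic commutator subgroup, whereas the paper's relies on the more elementary principle that a continuous map from a connected space to a discrete one is constant.
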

\begin{proof}
Let $C$ be the closure of $D\cap L$ in $H$.
By Lemma \ref{Lenew}, $C$ is a closed normal subgroup of $H$, and also of $L$.
Since $L$ is simple, $C$ is either discrete central or equal to the whole $L$.
 By contradiction, let us assume that $C$ is discrete and central.
In this case $C=D\cap L$.
Then $\pi(D)$ is algebraically isomorphic to $D/C$; so $D/C$ is abelian.
This implies that the group $H/C$, having $D/C$ as a dense subgroup, is also abelian.
In particular, $L/C$ is abelian.
Let $p: L\to L/C$ be the canonical homomorphism. For $x,y\in L$, since $L/C$ is abelian, we have that 
$$p(x)p(y)p(x)^{-1}p(y)^{-1}=p(xyx^{-1}y^{-1})$$
is the identity in $L/C$.
It follows that $xyx^{-1}y^{-1}\in\ker p= C$.
Thus we obtain the continuous mapping
$$L\times L\to C, (x,y)\mapsto xyx^{-1}y^{-1}.$$
Since $L$ is connected and $C$ is discrete, it then follows that the image of this mapping is the singleton $\{1_L\}$, where $1_L$ is the identity of $L$.
In other words, $x$ and $y$ commutes for any pair of $x,y$ in $L$. So $L$ is abelian, contradicting to that $L$ is simple.
So the equality $C=L$ must hold, thereby proving that $D\cap L$ is dense in $L$.
\end{proof}

Let $a\in K$ be such that $\hull{a}$ is dense in $K$, and let $b\in Z(L)$ be of infinite order.
Then $P:=\hull{(a,b)}\in H$ is a discrete cyclic central subgroup of $H$.
Let $G=H/P$ and $E$ be a dense subgroup of $G$.
The preimage $D=p^{-1}(E)$ under the canonical mapping $p: H\to H/P=G$ is a dense subgroup of $H$ containing $P$.
So $D\cap L$ is dense in $L$ by the above claim.
Since $L$ is second countable, $D\cap L$ contains a countable dense subgroup $M$.
The subgroup $PM$ of $D$ is then countable.
Let us show that $PM$ is dense in $H$.
On one hand, the closure $T$ of $PM$ in $H$ contains $L$, since $T\cap L\supseteq M$ and $M$ is dense in $L$.
This implies that $T=Q\times L$, where $Q$ is the closure of $\pi(T)$ or, equivalently, of 
$$\pi(PM)=\pi(P)=\hull{a}.$$
By the denseness of $\hull{a}$ in $K$, one obtains that $Q=K$, and hence $PM$ is dense in $H$.
Now $PM/P$ is a countable dense subgroup of $E$; so $E$ is separable.
It remains to show that $w(G)=\tau$.
 Note that this follows from the equality $$w(G)\cdot w(P)=w(H)=\tau$$
as established by Lemma \ref{3space} and the fact that $w(P)=\omega$.
\end{example}
 Let us show that in Example 5.2, the weight of the
 locally compact group $G$ is bounded by $2^\omega$, while the
 things may become different if the groups are “large
 enough”.
In proposition \ref{this:prop:feb82024} below, we will demonstrate that the weight of a locally compact group $G$, as in the preceding example, can never exceed $2^\omega$.
The celebrated Gleason-Yamabe Theorem \cite{Ya1,Ya2} states
that every connected locally compact group is a projective limit of Lie groups.
Thus Iwasawa's connected (L)-groups \cite{Iwa} are exactly connected locally compact groups and therefore his structure theorems for connected (L)-groups can be applied to the connected locally compact groups.
The {\em Iwasawa Local Splitting Theorem} (\cite{Iwa}) asserts that every connected locally compact group is locally isomorphic to the direct product of a compact  group and a Lie group.
In Chapter 13 of Hofmann and Morris' book \cite{HM2}, a global version of this theorem is given. More precisely, the local isomorphism is shown to be given by a quotient homomorphism with discrete kernel.
So, the structure theory of connected locally compact groups is largely reduced to Lie groups and compact groups.

Let us recall the following version of the Local Splitting Theorem of Iwasawa\footnote{In \cite{HM2}, Hofmann and Morris give a more general version for pro-Lie groups, i.e., inverse limits of Lie groups.}:
\begin{theorem}[{\cite[Theorem 13.17]{HM2}}]\label{ThLS}Let $G$ be a connected locally compact group. Then every identity neighbourhood contains a compact normal subgroup $N$ of $G$ such that $G/N$ is a Lie group and there exists a continuous homomorphism $\varphi: \widetilde{G/N}\to G$ such that $\mu: N\times \widetilde{G/N}\to G$ defined by $(y, x)\mapsto y\varphi(x)$ is an open (hence, surjective) continuous homomorphism with a discrete kernel, where $\widetilde{G/N}$ is the universal covering group of $G/N$.\end{theorem}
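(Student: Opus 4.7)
The plan is to combine the Gleason--Yamabe theorem (already invoked in the preceding discussion) with a Lie-theoretic lifting argument. First, given any identity neighbourhood $U$ of $G$, apply Gleason--Yamabe to produce a compact normal subgroup $N \subseteq U$ such that $L := G/N$ is a Lie group; connectedness of $G$ forces $L$ to be connected. Denote by $q: G \to L$ the quotient homomorphism and by $p: \widetilde{L} \to L$ the universal covering homomorphism, whose kernel $Z$ is a discrete central subgroup of $\widetilde L$ isomorphic to $\pi_1(L)$.

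The heart of the proof is the construction of a continuous homomorphism $\varphi: \widetilde L \to G$ with $q \circ \varphi = p$. I would proceed in two stages. \emph{Stage 1 (local section).} Since $N$ is compact and $L = G/N$ is a manifold, one obtains a continuous local section $s: V \to G$ of $q$ on a small identity neighbourhood $V \subseteq L$; this uses paracompactness of $L$ together with a slice argument for the action of $N$ on $G$ by translation. Shrinking $V$, the defect cocycle $c(a,b) := s(a)s(b)s(ab)^{-1} \in N$ takes values in an arbitrarily small neighbourhood of $1$ in $N$. \emph{Stage 2 (local homomorphism and extension).} Choosing $N$ small enough at the outset, the cocycle $c$ lands in a neighbourhood where, exploiting that $N_0$ is a compact connected abelian group and necessarily central in $G$, one trivialises $c$ by a Campbell--Hausdorff style adjustment to obtain a genuine local homomorphism $\tilde s$ defined on the pullback identity neighbourhood $p^{-1}(V) \subseteq \widetilde L$. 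Since $\widetilde L$ is simply connected, the monodromy theorem then extends $\tilde s$ uniquely to a global continuous homomorphism $\varphi: \widetilde L \to G$ with $q \circ \varphi = p$.

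With $\varphi$ in hand, the stated properties of $\mu(y,x) = y\varphi(x)$ are comparatively routine. The map $\mu$ is a homomorphism because $\varphi(\widetilde L)$ centralises $N$: the continuous map $\widetilde L \to \mathrm{Aut}(N)$, $x \mapsto (n \mapsto \varphi(x)n\varphi(x)^{-1})$, takes values in an arbitrarily small neighbourhood of $\mathrm{id}_N$ (by shrinking $N$), and a connected group mapping continuously into a small enough neighbourhood of the identity in a group without small subgroups must be trivial. Surjectivity follows from $G = N \cdot \varphi(\widetilde L)$, which is a consequence of $q \circ \varphi = p$ and surjectivity of $p$. Openness is the open mapping theorem applied to the $\sigma$-compact locally compact source $N \times \widetilde L$. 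Finally $\ker \mu = \{(\varphi(x)^{-1}, x) : \varphi(x) \in N\}$ projects bijectively to $\varphi^{-1}(N) = \ker(q \circ \varphi) = \ker p = Z$, which is discrete in $\widetilde L$.

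The decisive obstacle is Stage 2 of the construction of $\varphi$: converting a continuous local section of $q$ into a genuine local homomorphism requires trivialising an $N$-valued obstruction cocycle, and this is precisely the delicate technical step of Iwasawa's original argument, where the smallness of $N$, the centrality of $N_0$, and the simple connectivity of $\widetilde L$ all play essential roles. Everything else reduces, at least in spirit, to functorial properties of covering groups, the open mapping theorem, and Gleason--Yamabe.
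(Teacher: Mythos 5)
This statement is not proved in the paper at all: it is quoted verbatim from \cite[Theorem 13.17]{HM2}, so there is no internal argument to compare yours against. Judged on its own terms, your proposal has a genuine gap at exactly the point you flag as ``the decisive obstacle,'' and the route you propose for closing it cannot work. In Stage 2 you plan to trivialise the $N$-valued cocycle by ``exploiting that $N_0$ is a compact connected abelian group and necessarily central in $G$.'' Neither claim is available: for $G=\prod_{n\in\omega}SO(3)$, a connected compact group to which the theorem applies, every closed normal subgroup is a partial subproduct, so every compact normal $N$ with $G/N$ a Lie group is a cofinite subproduct $\prod_{n\geq k}SO(3)$; its identity component is neither abelian nor central, no matter how small the initial identity neighbourhood. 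The proof must therefore handle a genuinely non-abelian compact kernel, and in Hofmann--Morris this is done not by lifting a local section of $G\to G/N$ and killing a cocycle, but by splitting the pro-Lie algebra $\mathcal{L}(G)$ as $\mathcal{L}(N)\oplus\mathfrak{m}$ with $\mathfrak{m}$ a finite-dimensional \emph{ideal} commuting with $\mathcal{L}(N)$, and then integrating $\mathfrak{m}$ to obtain $\varphi$ on the simply connected group.

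A secondary problem: your argument that $\varphi(\widetilde{L})$ centralises $N$ appeals to a no-small-subgroups property of $\mathrm{Aut}(N)$, but for a general compact $N$ (again $N=\prod_{n\geq k}SO(3)$) the automorphism group has arbitrarily small nontrivial subgroups, so that step is unjustified as stated; the commuting of the two factors has to come out of the Lie-algebra splitting itself. The surrounding bookkeeping in your last paragraph (surjectivity of $\mu$ from $q\circ\varphi=p$, openness via the open mapping theorem on the $\sigma$-compact source, discreteness of $\ker\mu$ via $\varphi^{-1}(N)\supseteq\ker p$) is essentially fine. Since the paper itself treats the theorem as a black box, the appropriate course is to cite \cite[Theorem 13.17]{HM2} rather than attempt to reprove it.
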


\begin{proposition} \label{this:prop:feb82024} Let $G$ be a connected locally compact group with $w(G)>2^\omega$.
Then $G$ admits a dense subgroup of density $w(G)$.\end{proposition}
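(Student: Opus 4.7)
The plan is to apply the Local Splitting Theorem \ref{ThLS} to reduce the problem from the connected locally compact group $G$ to its compact normal subgroup $N$, and then to combine Theorems A and B on the pieces of $N$.

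I invoke Theorem \ref{ThLS} to produce an open continuous surjective homomorphism $\mu : N \times \widetilde{G/N} \to G$ with discrete kernel $K$, where $N$ is a compact normal subgroup of $G$ and $\widetilde{G/N}$ is the universal cover of the Lie group $G/N$. Since $N \times \widetilde{G/N}$ is $\sigma$-compact, $K$ is countable; and since $\widetilde{G/N}$ is a connected Lie group, $w(\widetilde{G/N}) = \omega$, so Lemma \ref{3space} (applied through $\mu$) gives $w(N) = w(G) > 2^\omega$. The map $\mu$ is a local homeomorphism with countable fibers, hence density passes through it: if $H \leq N \times \widetilde{G/N}$ is a dense subgroup containing $K$, then $\mu(H)$ is dense in $G$ with $d(\mu(H)) = d(H)$. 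Taking a countable dense subgroup $M \leq \widetilde{G/N}$ enlarged to contain the second projection of $K$, it suffices to find a dense subgroup $H_N \leq N$ of density $w(N)$ (subsequently enlarged by the first projection of $K$), since then $H_N \times M$ maps through $\mu$ to the required dense subgroup of $G$.

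To analyze $N$, I observe that the identity component $N_0$ is characteristic in $N$, hence normal in $G$, and is a connected compact group. The quotient $N/N_0$ is compact and totally disconnected; being normal in the connected group $G/N_0$, and since the continuous automorphism group of a profinite group is totally disconnected, the conjugation action of $G/N_0$ on $N/N_0$ must be trivial. Thus $N/N_0$ lies in the center of $G/N_0$, and in particular is abelian. Theorem B then furnishes a dense subgroup $H_0 \leq N_0$ with $d(H_0) = w(N_0)$, which I may take with $|H_0| = w(N_0)$; Theorem A furnishes a dense subgroup $\bar H_1 \leq N/N_0$ with $d(\bar H_1) = w(N/N_0)$, which I may take with $|\bar H_1| = w(N/N_0)$. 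Lifting $\bar H_1$ through a set-theoretic section to a subgroup $H_1 \leq N$ with $q(H_1) = \bar H_1$ and $|H_1| = w(N/N_0)$, where $q : N \to N/N_0$ is the quotient map, I set $H_N := \langle H_0 \cup H_1 \rangle$.

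Then $|H_N| \leq w(N_0) + w(N/N_0) = w(N)$, and $H_N$ is dense in $N$ since $q(H_N) = \bar H_1$ is dense in $N/N_0$ while $H_N \cap N_0 \supseteq H_0$ is dense in $N_0$. For any dense $T \subseteq H_N$, the image $q(T)$ is dense in $q(H_N) = \bar H_1$, giving the lower bound $|T| \geq d(\bar H_1) = w(N/N_0)$. The hard part will be to obtain $|T| \geq w(N_0)$ in the regime $w(N_0) > w(N/N_0)$; in the opposite regime, $w(N/N_0) = w(N)$ and the projection bound already gives $|T| \geq w(N)$. When $w(N_0) > w(N/N_0)$, the hypothesis $w(G) > 2^\omega$ enters decisively: it forces $w(N_0) > 2^\omega$, so $N_0$ is non-separable. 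A delicate argument is then needed, exploiting that $|H_1| \leq w(N/N_0) < w(N_0)$ so that $H_N$ is ``mostly'' contained in the closed subgroup $H_N \cap N_0$, and that the dense subgroup $H_0$ produced by Theorem B resists approximation inside $N_0$ by fewer than $w(N_0)$ points. Translating this resistance into a lower bound on $|T|$ — the most technical part of the proof — yields $d(H_N) = w(N) = w(G)$ and completes the argument.
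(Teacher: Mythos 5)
There is a genuine gap, and you have flagged it yourself: the entire argument funnels into showing $d(H_N)=w(N)$ in the regime $w(N_0)>w(N/N_0)$, and at that point you write that ``a delicate argument is then needed'' and call it ``the most technical part of the proof'' without supplying it. This is not a routine omission. Since $N_0$ is closed but not open in $N$, a dense subset $T$ of $H_N=\hull{H_0\cup H_1}$ need not meet the coset $H_N\cap N_0$ in a dense subset of $H_N\cap N_0$ (points of $N_0$ can be approximated from other cosets), so the ``resistance'' of $H_0$ to approximation inside $N_0$ does not transfer to a lower bound on $|T|$. Controlling the density of a subgroup generated by a dense subgroup of a closed piece together with a lift from the quotient is precisely the difficulty that the paper's Conjecture isolates for general compact groups; your proposal assumes it can be overcome rather than overcoming it. (The preliminary reductions — $K$ countable by $\sigma$-compactness, $w(N)=w(G)$, density passing through the open map $\mu$, and $N/N_0$ central in $G/N_0$ hence abelian — are all fine.)

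The paper sidesteps this entirely by not working inside $N$. It sets $L=\overline{\varphi(\widetilde{G/N})}$, notes $L$ is a separable normal subgroup, so $w(L)\leq 2^\omega<w(G)$ and hence $w(G/L)=w(G)$ by Lemma \ref{3space}; and since $G=NL$, the quotient $G/L\cong N/K$ is compact, while as a quotient of the connected group $G$ it is automatically connected. Theorem B then applies directly to $G/L$, and the preimage in $G$ of the resulting dense subgroup is the desired dense subgroup of density $w(G)$. The key move you are missing is to quotient by the (small, separable) Lie part rather than restrict to the compact part: the quotient retains both full weight and connectedness, so no decomposition of a possibly disconnected compact group — and no recombination of densities — is ever needed.
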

\begin{proof} Take any identity neighbourhood in $G$, we then find such a compact normal subgroup $N$ as described in Theorem \ref{ThLS}. 
We also use the notation $\varphi$, $\mu$ and $\widetilde{G/N}$ same as in the theorem.
Let $L$ be the closure of $\varphi(\widetilde{G/N})$ in $G$ and set $K=L\cap N$.
Note that $\{1\}\times \widetilde{G/N}$ is normal in $N\times \widetilde{G/N}$, so $L$ a normal subgroup of $G$.
Since $L$ contains a dense subgroup which is a continuous image of a connected Lie group, it is separable.
So one has $w(L)\leq 2^\omega,$ by \cite[Theorem 1.5.7]{Eng}.
We now have  that $w(G/L)=\tau$ by Lemma \ref{3space}, where $\tau=w(G)$.
Note that the mapping $\mu$ defined in Theorem \ref{ThLS} is surjective, so $G=NL$ and $G/L$ is topologically isomorphic to $N/K$.
It follows that $G/L$ is connected and compact.
By Theorem B, $G/L$ has a dense subgroup of density $\tau$.
The preimage of this subgroup under $G\to G/L$ (which is a dense subgroup of $G$) has density $\geq \tau$.
It suffices to note that the strict inequality never holds due to the equality $w(G)=\tau$.
\end{proof}

\begin{remark} A standard argument can be applied to show that a connected locally compact abelian group $G$ admits a dense subgroup $H$ with $d(H)=w(G)$. \end{remark}
\section{Theorem C and its consequences} \label{proof:ofthmC}

We shall need the so-called {\em Countable Layer Theorem} for compact groups of K. Hofmann and S. Morris.
Recall that for a topological group $G$, a {\em (topological) characteristic} subgroup is a subgroup invariant under all topological automorphisms of $G$. So if $H$ is a topological group containing $G$ as a normal subgroup, then every characteristic subgroup of $G$ is normal in $H$.
\begin{theorem}[\cite{HM1}] \label{ThCL} Let $G$ be a compact group. Then there exists a chain of closed characteristic subgroups (that possibly stops after finitely many steps)
$$G=\Omega_0 G\geq \Omega_1 G\geq \Omega_2G\geq...\geq \bigcap_{n=0}^\infty\Omega_n G=Z_0(G_0)$$
of $G$ such that each factor $\Omega_{n}G/\Omega_{n+1}G$ is a product of simple compact groups\footnote{A simple compact group is either a simple connected compact Lie group with trivial centre or a finite (possibly abelian) simple group.}.
\end{theorem}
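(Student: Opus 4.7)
The plan is to construct the chain recursively via a canonical layer-extraction operation. For any compact group $H$, let $\Omega_1 H$ denote the intersection of all closed normal subgroups $N \leq H$ for which $H/N$ is topologically isomorphic to a product of simple compact groups (in the sense of the footnote). Setting $\Omega_0 G = G$ and $\Omega_{n+1} G = \Omega_1(\Omega_n G)$ inside $G$, one obtains a descending chain; since the defining condition is manifestly automorphism-invariant, each $\Omega_n G$ is characteristic in $G$ and, by induction, normal in $G$ itself.

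The first substantive step is to show that $H / \Omega_1 H$ is again a product of simple compact groups. The diagonal embedding $H / \Omega_1 H \hookrightarrow \prod_i H/N_i$, where $\{N_i\}$ ranges over the defining family, realizes this quotient as a closed subgroup of a product of products of simple compact groups. The key structural input is a classification of closed normal subgroups of such products: for non-abelian connected simple compact Lie groups with trivial centre, a standard commutator argument shows that any closed normal subgroup of an arbitrary product is the full subproduct on some subset of factors. For products of finite simple groups, the minimality of the defining intersection helps collapse diagonal redundancies, though the finite abelian factors $\Z(p)$ require separate care.

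To analyze where the chain terminates, I would split via the identity component: $G_0$ is connected and closed in $G$, and $G/G_0$ is profinite. For the profinite quotient, one uses that every finite quotient has a composition series; compatible inverse-limit arguments extract layers whose factors are products of finite simple groups. For the connected part, the Levi-Mal'cev Structure Theorem (Theorem \ref{ThLM}) is decisive: it writes $G_0 = Z_0(G_0) G_0'$ with $G_0'$ (essentially) a quotient of a product of simple simply connected compact Lie groups, and further quotienting by the centre yields a product of centreless simple compact Lie groups. Iterating removes these connected layers until only $Z_0(G_0)$ remains, which admits no non-trivial product-of-simples quotient, since abelian connected simple compact Lie groups with trivial centre are trivial, and connected compact groups have no non-trivial finite quotients.

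The main obstacle is proving the structural classification underlying the second paragraph with sufficient precision to handle all cases allowed by the footnote, in particular the presence of finite simple abelian factors which do not satisfy a clean subproduct classification. A second, related difficulty is guaranteeing that the chain has length at most $\omega$ rather than requiring transfinite iteration; this should follow from organizing the decomposition into a bounded number of structural layers (connected simple Lie, connected abelian centre, profinite) and invoking compactness of $G$ to control the nesting within each. The remaining verification that $\bigcap_n \Omega_n G$ is exactly $Z_0(G_0)$, rather than something strictly larger, then amounts to checking at each stage that the maximality of the product-of-simples quotient has been fully extracted from both the connected and profinite halves before passing to the next layer.
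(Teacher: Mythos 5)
First, a point of reference: the paper does not prove this statement at all. Theorem \ref{ThCL} is quoted from Hofmann and Morris \cite{HM1} and used as a black box, so there is no internal argument to compare yours against; your proposal has to stand on its own. Its architecture does match the published proof: one defines $\Omega_1 H$ as the intersection of all closed normal subgroups $N$ with $H/N$ a product of simple compact groups, proves that this class of quotients is stable under the resulting subdirect products, and then identifies $\bigcap_{n}\Omega_n G$. So the plan is the right one, but as written it has two genuine gaps, both of which you partly acknowledge.

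The first gap is the closure lemma in your second paragraph: you need that $H/\bigcap_i N_i$ is again a product of simple compact groups, which reduces to classifying closed subdirect subgroups of products of simple compact groups. The abelian worry you raise is in fact surmountable (a closed subgroup of $\Z(p)^{\kappa}$ is an elementary abelian pro-$p$ group, hence by Pontryagin duality again a power of $\Z(p)$), but the mixed case --- different primes, finite non-abelian factors and centreless simple Lie factors together --- still has to be assembled, and you have not done so. The second and more serious gap is termination. Your proposed fix, bounding the chain by ``a bounded number of structural layers,'' cannot work for the profinite part: finite groups need arbitrarily many layers (the cyclic group $\Z(2^k)$ satisfies $\Omega_1\Z(2^k)\cong\Z(2^{k-1})$ and so requires $k$ steps), so $G/G_0$ is not exhausted after boundedly many stages. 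The correct argument is monotonicity under quotients: for a surjection $q\colon G\to H$ one has $q(\Omega_n G)\subseteq \Omega_n H$, so for each open normal subgroup $N$ of $G$ the finite group $G/N$ gives $\Omega_k G\subseteq N$ for some finite $k$, whence $\bigcap_n\Omega_n G\subseteq G_0$ after only $\omega$ steps; the connected part is then reduced, via its centre-free semisimple quotient and the totally disconnected centre of the semisimple layer, back to the profinite case. Finally, the inclusion $Z_0(G_0)\subseteq\bigcap_n\Omega_n G$ needs the observation that a connected abelian normal subgroup of a product of simple compact groups is trivial (such a product is either non-abelian or totally disconnected), so that $Z_0(G_0)$ lies in every $N$ with strictly reductive quotient; you list the right ingredients here but never assemble them. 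Until these two points are filled in, the proposal is a plausible outline rather than a proof.
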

Here we recall that $G_0$ is the identity component of $G$ and $Z_0(G_0)$ is the identity component of $Z(G_0)$, the centre of $G_0$. 
\begin{proof}[\bf Proof of Theorem C]
Let $\tau$ be the supremum of densities of subgroups of $G$.
Then $\tau\leq w(G)$.
\begin{claim} \label{this:claim:feb82020}
The equality $\tau=w(G)$ holds if $G$ is compact.
\end{claim}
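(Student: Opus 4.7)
The plan is to combine the Countable Layer Theorem (Theorem \ref{ThCL}) with Theorem A and Lemma \ref{Le3}. The inequality $\tau \leq w(G)$ is immediate since every subgroup of $G$ is a subspace of $G$. Hence the task reduces to showing that for every infinite cardinal $\mu < w(G)$ one can produce a subgroup of $G$ of density strictly greater than $\mu$.

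Apply Theorem \ref{ThCL} to obtain the chain $G = \Omega_0 G \geq \Omega_1 G \geq \cdots$ with $\bigcap_n \Omega_n G = Z_0(G_0)$, and set $F_n := \Omega_n G / \Omega_{n+1} G = \prod_{j \in J_n} S_{n,j}$. By Lemma \ref{3space},
$$w(G) = w(Z_0(G_0)) \cdot w(G/Z_0(G_0)).$$
Iterating Lemma \ref{3space} along the finite initial segments of the chain gives $w(G/\Omega_n G) \leq \sup_{i<n} w(F_i)$, and since $G/Z_0(G_0)$ embeds as a closed subgroup of $\prod_n G/\Omega_n G$ via the canonical diagonal map, we deduce $w(G/Z_0(G_0)) \leq \sup_n w(F_n)$. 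Consequently either $w(Z_0(G_0)) = w(G)$ or $\sup_n w(F_n) = w(G)$.

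In the first case, Theorem A applied to the compact abelian group $Z_0(G_0)$ produces a dense subgroup $H \leq Z_0(G_0)$ with $d(H) = w(Z_0(G_0)) = w(G)$, and $H$ is a subgroup of $G$ of density $w(G)$. In the second case, choose $n$ with $w(F_n) > \mu$; since each $S_{n,j}$ is metrizable, $w(F_n) \leq \omega \cdot |J_n|$, so $|J_n| > \mu$, and Lemma \ref{Le3} guarantees that the restricted product $\sigma \prod_{j \in J_n} S_{n,j}$ is a subgroup of $F_n$ of density at least $|J_n|$. Its preimage under the quotient homomorphism $\Omega_n G \to F_n$ is a subgroup of $G$ whose density dominates that of its image, and hence exceeds $\mu$. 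The main step requiring care is the weight bound $w(G/Z_0(G_0)) \leq \sup_n w(F_n)$, which rests on the countability of the Hofmann--Morris chain together with the standard estimate for the weight of a countable inverse limit of compact groups; once this is in place, the rest is just a bookkeeping combination of the three ingredients above.
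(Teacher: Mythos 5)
Your proof is correct and follows essentially the same route as the paper: the Countable Layer Theorem to split $G$ into $Z_0(G_0)$ and the layers $\Omega_nG/\Omega_{n+1}G$, Theorem A for the abelian piece, Lemma \ref{Le3} applied to the restricted product of the simple factors (pulled back along the quotient), and Lemma \ref{3space} plus the inverse-limit embedding for the weight bookkeeping. The only difference is presentational — you argue by exhibiting subgroups of density exceeding each $\mu<w(G)$, whereas the paper runs the contrapositive, bounding $w(G)$ by $\tau$ — so no further comparison is needed.
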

\begin{proof}
Let $G=\Omega_0 G\geq \Omega_1 G\geq \Omega_2G\geq...\geq \Omega_nG\geq...$ be a chain of closed subgroups of $G$ as in Theorem \ref{ThCL}.
By our assumption, for each $n\in \omega$, all subgroups of $\Omega_nG$ are of density $\leq \tau$.
It follows that any subgroup of $\Omega_{n}G/\Omega_{n+1}G$ has density not greater than $\tau$ as well.
By Lemma \ref{Le3}, this implies that $\Omega_{n}G/\Omega_{n+1}G$ is the product of at most $\tau$ many simple compact groups.
So we have
$$\kappa_n:=w(\Omega_{n}G/\Omega_{n+1}G)\leq \tau.$$
Proceeding by induction, by Lemma \ref{3space}, it follows that $w(G/\Omega_nG)\leq \tau$ for all $n\in \omega$.
Note that $G/Z_0(G_0)$ is the limit of the inverse system $\{G/\Omega_nG; \varphi^j_{i}\}$ indexed by $\omega$, where for each pair $i<j$, $\varphi^j_i: G/\Omega_jG\to G/\Omega_iG$ is the canonical quotient mapping.
So $G/Z_0(G_0)$ embeds into the product $\prod_{n\in \omega}G/\Omega_nG$; thus
$$w(G/Z_0(G_0))=\sup_{n\in \omega}\kappa_n \leq \tau.$$
To conclude this case, by Lemma \ref{3space}, it suffices to note that the inequality
$$w(Z_0(G_0))\leq \tau$$
follows from Theorem A.
\end{proof}
Let us now assume that $G$ is locally compact.
Then, $G/G_0$ is totally disconnected.
The well-known van Dantzig Theorem yields that $G/G_0$ has an open compact subgroup $U$.
Let $V$ be the preimage of $U$ in $G$ under the canonical homomorphism of $G$ onto $G/G_0$. Then $V$ is an open subgroup of $G$ containing $G_0$ and $V/G_0$ is compact.
The solution of Hilbert's fifth problem then yields that $V$ contains a compact normal subgroup $N$ such that $V/N$ is a Lie group.
By the above argument and the equality $w(V/N)=\omega$,
we have $w(N)\leq \tau$.
This implies that $w(V)\leq \tau$.
We also have $|G/V|\leq \tau$ because $V$ is open in $G$ and $d(G)\leq \tau$ holds.
Thus, 
$$w(G)=w(V)\cdot |G/V|\leq \tau.$$

Now we consider the second part of the theorem and assume that the cofinality of $\lambda:=w(G)$ is uncountable.
By the above argument and Lemma \ref{3space}, we have
$$w(G)=w(V)\cdot |G/V|=w(N)\cdot w(V/N)\cdot |G/V|=w(N)\cdot |G/V|.$$
If $w(N)<\lambda$, then $|G/V|=\lambda$.
Therefore,  $G$ is of density $\lambda$ because $V$ is open in $G$.

Now let us consider the case $w(N)=\lambda$.
Using the Countable Layer Theorem again, one obtains the following sequence of closed normal subgroups of $N$:

$$N=\Omega_0N\geq \Omega_1N \geq \Omega_2N\geq...\geq \bigcap_{n=1}^\infty\Omega_nN=Z_0(N_0),$$
such that $\Omega_nN/\Omega_{n+1}N$ is a direct product of simple compact groups.
If the abelian group $Z_0(N_0)$ has weight $\lambda$, then it follows directly from Theorem A that $N$, hence also $G$, has a subgroup of density $\lambda$.
If $w(Z_0(N_0))$ is less than $\lambda$, then  there exists $n\in \omega$ such that $$w(\Omega_nN/\Omega_{n+1}N)=w(N)=\lambda,$$
since $\lambda$ is not cofinal with $\omega$.
According to Lemma \ref{Le3}, $\Omega_nN/\Omega_{n+1}N$ has a dense subgroup of density $\lambda$, and so does $\Omega_nN$.
Therefore, $G$ contains a subgroup of density $\lambda$.
\end{proof}

Recall that a topological space is called {\em hereditarily separable} if all subspaces of it are separable. 

To close this section, we point out that Theorem C is a generalization of the following:

\begin{fact}
Every hereditarily separable compact group is metrizable.
\end{fact}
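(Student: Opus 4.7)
The plan is to derive this fact as an essentially immediate consequence of Theorem C, passing through the corollary that a locally compact group with only separable subgroups is metrizable. The key point is that ``hereditarily separable'' is a strictly stronger hypothesis than ``every subgroup is separable'', since every subgroup is in particular a subspace.

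First I would observe that if $G$ is hereditarily separable, then every subgroup $H$ of $G$, being a subspace, satisfies $d(H)\leq\omega$. Since $G$ is compact (hence locally compact) and we may assume $G$ is infinite (the finite case being trivial), the first part of Theorem C applies: $w(G)$ equals the supremum of the densities of subgroups of $G$. Therefore $w(G)\leq\omega$, i.e., $G$ is second countable.

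Finally, I would invoke the classical fact that a compact (Hausdorff) group of countable weight is metrizable; equivalently, $\{1_G\}$ is a $G_\delta$-set, so by the Birkhoff--Kakutani theorem the topology is induced by a metric. This yields metrizability of $G$.

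There is essentially no obstacle here: the statement is recorded as a consequence of Theorem C precisely because it follows in two short lines once Theorem C is available. The only care needed is to note the (obvious) implication from ``all subspaces separable'' to ``all subgroups separable'', which is what allows the hypothesis of the (stronger) corollary following Theorem C to be satisfied.
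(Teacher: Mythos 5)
Your proof is correct, but it is not the justification the paper actually sketches. The paper does frame this Fact as the statement that Theorem C generalizes, and that framing is exactly the deduction you carry out: hereditary separability makes every subgroup separable, the first part of Theorem C then forces $w(G)\leq\omega$, and a compact group of countable weight is metrizable. The explicit argument the paper offers, however, is entirely classical and external to its own machinery: every compact group is dyadic, every hereditarily separable space has countable tightness, and by the 1968 theorem of Arhangel'skii and Ponomarev every dyadic compact space of countable tightness is metrizable. The two routes buy different things. Yours uses only the (much weaker) hypothesis that all \emph{subgroups} are separable, so it in fact proves the stronger Corollary stated after Theorem C; the price is that it rests on the full strength of Theorem C (the Countable Layer Theorem, Theorem A, and the structure theory of locally compact groups). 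The paper's route is a two-line appeal to a deep theorem of general topology and genuinely needs separability of arbitrary subspaces (to extract countable tightness), so it does not yield the subgroup version. Both are complete proofs of the Fact as stated; yours is the one that explains in what sense Theorem C is a generalization of it.
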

Since every compact group is dyadic and the tightness of every hereditarily separable space is countable,
the above Fact is a very special case of the following impressive theorem proved by Arhangel'skii and Ponomarev in \cite{AP} in the year 1968:
{\em Every dyadic compact space of countable tightness is metrizable.}
We refer the readers to \cite{AP} or \cite{AT} for the notions.


\section{Transfinite nilpotent profinite groups} \label{transfinite:section}
Thereafter, the identity element of a group will be noted by $1$.
For a compact group $G$, let $G_0=G$\footnote{note that $G_0$ was used to denote the identity component in previous sections} and for any ordinal $\alpha>0$, set
$$G_\alpha=\begin{cases}
           \overline{[G, G_\beta]} &\mbox{if~$\alpha=\beta+1$;}\\
           \bigcap_{\beta<\alpha}G_\alpha &\mbox{if~$\alpha$~is~limit.}
\end{cases}$$
A compact group $G$ is called \emph{transfinite nilpotent} if $G_\alpha=\{1\}$ holds for some ordinal $\alpha$.
It is known that every transfinite nilpotent compact group $G$ is \emph{countably nilpotent}, that is, $\bigcap_{n<\omega}G_n=\{1\}$ holds (see \cite[Corollary 1.10]{HM1}).

Using Theorem A, we are going to prove the following:
\begin{corollary}\label{Coro:Jan29}Let $G$ be a transfinite nilpotent profinite group. If the dense subgroups of $G$ are all separable, then $G$ is metrizable.\end{corollary}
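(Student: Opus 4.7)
The plan is to exploit the Sylow pro-$p$ decomposition of the pronilpotent group $G$, then to apply Theorem~A to the Frattini quotient of each factor, which is elementary abelian and so within reach of Theorem~A.

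Since $G$ is transfinite nilpotent, \cite[Corollary 1.10]{HM1} gives that it is countably nilpotent, $\bigcap_{n<\omega} G_n = \{1\}$. Compactness and the descending nature of $\{G_n\}$ then ensure that every open normal subgroup contains some $G_n$, so every finite quotient of $G$ is nilpotent, and $G$ is pronilpotent. The standard structure theory yields $G = \prod_p G_p$ with $G_p$ the Sylow pro-$p$ subgroup. The hypothesis passes from $G$ to each $G_p$: $G$ itself is separable (any dense subgroup is separable, so $G$ has a countable dense subset), so $G^{(p)} := \prod_{q\ne p} G_q$ is separable as a continuous image of $G$ and admits a countable dense subgroup $D^{(p)}$. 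For any dense $D_p \le G_p$, the product $D_p \times D^{(p)}$ is dense in $G$, hence separable, which forces $D_p$ separable since $D^{(p)}$ is countable.

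For each $G_p$, the Frattini quotient $G_p/\Phi(G_p)$ is elementary abelian, isomorphic to $\mathbb{F}_p^{I_p}$ for some cardinal $I_p$, and the hypothesis descends further to this quotient through the open surjection $\pi\colon G_p\to G_p/\Phi(G_p)$. By Theorem~A applied to this compact abelian group, there is a dense subgroup $H \le \mathbb{F}_p^{I_p}$ with $d(H) = w(\mathbb{F}_p^{I_p}) = |I_p|$. Its preimage $\pi^{-1}(H) \le G_p$ is dense (preimages of dense sets under open continuous surjections are dense) and surjects continuously onto $H$, so $d(\pi^{-1}(H)) \ge |I_p|$. Separability of $\pi^{-1}(H)$ forced by the inherited hypothesis yields $|I_p| \le \omega$.

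By the Burnside basis theorem, $G_p$ is therefore topologically generated by a countable set (any lift of an $\mathbb{F}_p$-basis of $G_p/\Phi(G_p)$). Combined with countable nilpotency of $G_p$ (which it inherits as a direct factor of $G$), this implies that each abelian factor $(G_p)_n/(G_p)_{n+1}$ of the descending central series is topologically generated by the countable set of weight-$n$ iterated commutators of the generators of $G_p$, hence is a countably topologically generated compact abelian pro-$p$ group, and so metrizable; inductively via Lemma~\ref{3space} every $G_p/(G_p)_n$ is metrizable, and $G_p$, being the countable inverse limit of these quotients, is itself metrizable. Finally, $G = \prod_p G_p$ is a countable product of metrizable compact groups, hence metrizable, as required. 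The only mildly non-routine step is the metrizability of topologically countably generated compact abelian pro-$p$ groups, which in turn reduces to the fact that $\mathbb{Z}_p^{\omega}$ has weight $\omega$.
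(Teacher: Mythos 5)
Your architecture is genuinely different from the paper's (which descends the lower central series to the first non-metrizable factor $G_n/G_{n+1}$ and combines Varopoulos's Sequence Lifting Theorem with a minimal closed supplement), and most of it is sound: the Sylow decomposition $G=\prod_p G_p$, the transfer of the separability hypothesis to each $G_p$ and then to $G_p/\Phi(G_p)\cong\Z(p)^{I_p}$, and the conclusion $|I_p|\leq\omega$ via Theorem A are all correct. The gap is in the final step. The principle you invoke --- that a topologically countably generated compact abelian pro-$p$ group is metrizable --- is false: $\Z(p)^{2^\omega}$ is separable by the Hewitt--Marczewski--Pondiczery theorem, hence topologically generated by a countable set, yet it has weight $2^\omega$. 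Countable topological generation yields metrizability of a profinite group only when the generating set can be taken to \emph{converge to} $1$ (this is exactly \cite[Corollary 2.6.3]{RZ}, which the paper quotes), and nothing in your argument guarantees that the lifts of a basis of $G_p/\Phi(G_p)$, or the iterated commutators built from them, converge to $1$. For the same reason your closing reduction ``to the fact that $\Z_p^\omega$ has weight $\omega$'' does not go through: without convergence to $1$ the group in question need not be a continuous quotient of $\Z_p^\omega$ (the pro-$p$ completion of the free abelian group of countable rank has weight $2^\omega$).

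The step can be repaired, after which your route becomes a valid alternative proof and the detour through the lower central series of $G_p$ becomes unnecessary. Either quote the rank formula for infinite pro-$p$ groups, namely that the minimal cardinality of a generating set converging to $1$ equals $\dim_{\mathbb{F}_p}(G_p/\Phi(G_p))$ and that this cardinal (times $\aleph_0$) equals $w(G_p)$ (see \cite{RZ}); or argue as the paper does: take a generating sequence $(y_i)$ of $G_p/\Phi(G_p)\cong\Z(p)^{\omega}$ converging to $1$, lift it by Theorem \ref{sequence:lift:thm} to a convergent sequence $(x_i)$ in $G_p$ with limit $x_0\in\Phi(G_p)$, and replace it by $(x_ix_0^{-1})$, which converges to $1$ and still generates modulo $\Phi(G_p)$, hence generates $G_p$ by the Burnside basis theorem. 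Once each $G_p$ is metrizable, your last step (a countable product of metrizable compact groups is metrizable) is fine.
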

A \emph{profinite} group is a totally disconnected compact group. In such a group $G$, we call a subset $X$ a \emph{set of generators converging to 1} if $\hull{X}$ is dense in $G$ and every neighbourhood of 1 contains all but finite many elements of $X$. So a countably infinite set of generators converging to 1 is a sequence converging to 1 which generates a dense subgroup.
It is known that a profinite group is metrizable if and only if it contains a countable set of generators converging to 1 (see \cite[Corollary 2.6.3]{RZ}). If such a set is infinite, then it is also a sequence converging to 1; if it is finite, one can easily extend it into a sequence (with pairwise distinct elements) converging to 1 as well, provided the group is non-discrete, equivalently, infinite. 
In summary, an infinite metrizable profinite group always has a sequence converging to 1 which generates a dense subgroup.

To prove Corollary \ref{Coro:Jan29}, we also need a useful theorem of Varopoulos.
\begin{theorem}[{\cite[The Sequence Lifting Theorem]{Var}}] \label{sequence:lift:thm} Let $G$ be a locally compact group, $H$ a closed subgroup of $g$, and $p:G\to G/H$ the natural projection onto the left coset space $G/H$.
Then any convergent sequence $y_n\to y_0 ~(n\to \infty)$ in $G/H$ can be ``lifted'' to a convergent sequence $x_n\to x_0 ~(n\to \infty)$ in $G$ such that $p(x_n)=y_n$ for all $n\in \omega$.\end{theorem}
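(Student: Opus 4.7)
The plan is to combine the openness of the quotient map $p:G\to G/H$ (a standard fact for topological groups modulo closed subgroups) with the local compactness of $G$, and then execute a diagonal construction over a shrinking neighborhood basis of the identity.

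I begin by picking an arbitrary $x_0\in p^{-1}(y_0)$, which exists by surjectivity of $p$; this fixes the candidate limit. The task is then to produce $x_n\in p^{-1}(y_n)$ with $x_n\to x_0$ in $G$. The key observation is: for every identity neighborhood $U$ in $G$, the set $x_0U$ is a neighborhood of $x_0$, and $p(x_0U)$ is a neighborhood of $y_0$ in $G/H$ by openness of $p$; thus from $y_n\to y_0$ we obtain an index $N_U$ such that, for all $n\geq N_U$, the coset $p^{-1}(y_n)$ meets $x_0U$, producing a candidate lift $x_n\in x_0U$ with $p(x_n)=y_n$.

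Next I would execute the diagonal construction. Assume for a moment that $e\in G$ admits a countable neighborhood basis $V_1\supseteq V_2\supseteq\ldots$ of compact symmetric neighborhoods. Choose integers $N_1<N_2<\ldots$ so that $y_n\in p(x_0V_k)$ for every $n\geq N_k$, and pick $x_n\in x_0V_k$ with $p(x_n)=y_n$ for each $N_k\leq n<N_{k+1}$ (and $x_n$ arbitrary in $p^{-1}(y_n)$ for $n<N_1$). Then for any neighborhood $W$ of $x_0$, one has $V_k\subseteq x_0^{-1}W$ for some $k$, so $x_n\in W$ for all $n\geq N_k$, establishing $x_n\to x_0$.

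The principal obstacle is that a general locally compact group need not be first countable, so the identity may lack a countable neighborhood basis. I would overcome this by reducing to a $\sigma$-compact open subgroup $G_1\leq G$ containing $x_0$ together with a fixed compact neighborhood of it (and hence the candidate lifts); such a $G_1$ is locally compact and $\sigma$-compact, and by the Kakutani--Kodaira theorem admits a compact normal subgroup $M$ with $G_1/M$ metrizable, hence first countable at the identity. Transporting the diagonal construction through the compact-fiber quotient $G_1\to G_1/M$, and using that $G_1$ is open in $G$ so that convergence in $G_1$ implies convergence in $G$, would complete the argument. The delicate point, and the main technical difficulty, is ensuring that this reduction interacts coherently with the original quotient $p$, so that the lifts ultimately chosen in $G_1\subseteq G$ genuinely satisfy $p(x_n)=y_n$ in $G/H$ rather than in some auxiliary quotient produced along the way.
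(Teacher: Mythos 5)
The paper offers no proof of this statement: it is quoted verbatim from Varopoulos \cite{Var} and used as a black box, so there is no internal argument to compare yours against. Judged on its own terms, your proposal correctly handles the easy half --- openness of $p$ plus the diagonal construction settles the case where $G$ is first countable --- and the reduction to an open $\sigma$-compact subgroup $G_1$ is sound (the tail of $(y_n)$ lies in the open set $p(G_1)\cong G_1/(G_1\cap H)$, and finitely many exceptional indices are irrelevant). But the final step is a genuine gap, not just a deferred detail. The Kakutani--Kodaira theorem gives a compact normal $M\trianglelefteq G_1$ with $G_1/M$ metrizable; this makes $G_1/M$ first countable, \emph{not} $G_1$, so the diagonal argument can only be run downstairs. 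What you then need is to lift a convergent sequence from $G_1/M$ back up to $G_1$ through a quotient whose fibers are cosets of the (possibly non-metrizable) compact group $M$ --- and that is precisely another instance of the Sequence Lifting Theorem, with $H$ replaced by $M$. Nothing about $M$ being compact and normal makes this instance obviously easier: the natural attempts (choose representatives in a compact set and extract a convergent subsequence; or run a compactness/finite-intersection argument on the fibers) fail because a sequence in a non-metrizable compact group need not have a convergent subsequence, and because the index beyond which $p^{-1}(y_n)$ meets a given neighbourhood depends on that neighbourhood, of which there may be uncountably many. So the scheme is circular at its core.

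On top of that, even granting the compact-fiber lifting, your plan must interleave the two quotients $G_1\to G_1/M$ and $G_1\to G_1/(G_1\cap H)$ so that the points finally selected lie in the correct $H$-cosets ($p(x_n)=y_n$ exactly), which you flag as ``the delicate point'' but do not carry out; the natural intermediate object is $G_1/((G_1\cap H)M)$, and controlling the error between a lift that is correct modulo $M$ and one that is correct modulo $H$ requires an argument you have not supplied. In short: the first-countable case and the $\sigma$-compact reduction are fine, but the essential difficulty of the theorem --- sequences in non-first-countable groups --- is exactly the part that remains unproved. If you want to complete this, you should consult Varopoulos's original argument rather than expect Kakutani--Kodaira alone to close the loop.
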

\begin{proof}[{\bf Proof of Corollary \ref{Coro:Jan29}}]
Since $G$ is transfinite nilpotent, it is also countably nilpotent.
For $n\in \omega$, we let $G_n$ be the closed characteristic subgroup defined at the beginning of this section.
If $G$ is not metrizable, then there exists at least one integer $m\geq 0$ such that $K_m:=G_m/G_{m+1}$ is not metrizable.
Take $n$ be the least integer having this property.
Let $H=G/G_{n+1}$. Then $K_n$ is in the centre of $H$.
 Denote by $\pi$ the quotient homomorphism $H\to H/K_n$.
Take a minimal closed subgroup $L$ of $H$ such that $\pi(L)=H/K_n$\footnote{see, for example, \cite[Lemma 2.8.15]{RZ} for the existence of such a subgroup $L$}.
\begin{claim}
$L$ is metrizable.
\end{claim}
\begin{proof}
By the choice of $n$, the quotient $H/K_n\cong G/G_n$ is necessarily metrizable.
 We may then take a sequence $(y_i)_{i\in \mathbb{N}}$ converging to 1 in $H/K_n$ such that $\{y_i:i\in \mathbb{N}\}$ generates a dense subgroup of $H/K_n$.
By Theorem \ref{sequence:lift:thm},  there exists a convergent sequence $(x_i)_{i\in \mathbb{N}}$ in $L$ such that $\pi(x_i)=y_i$ for each $i\in \mathbb{N}$.
Without loss of generality, we may assume that $\lim\limits_{i\to \infty}x_i=1$.
Denote by $L'$ the closed subgroup of $L$ generated by $\{x_i:i\in \mathbb{N}\}$.
Then, $\pi(L')$ is a closed subgroup of $H/K_n$ containing $\{y_i:i\in \mathbb{N}\}$; so $\pi(L')=H/K_n$.
By the minimality of $L$, we conclude that $L'=L$.
Since $\{x_i: i\in \mathbb{N}\}$ is a generating set of the profinite group $L$ converging to 1, $L$ is metrizable. \end{proof}

By the equality $\pi(L)=H/K_n$, one obtains that $H=LK_n$.
Since $K_n$ is in the centre of $H$, it normalizes $L$. This implies that $L$ is a normal subgroup of $H$.
Define $\Delta=L\cap K_n$.
Then $\Delta$, as a subgroup of $L$, is metrizable and hence $K_n/\Delta$ is non-metrizable.
So it has a non-separable dense subgroup, according to Theorem A.
 Since $$H/L\cong K_n/\Delta,$$
it follows that $H/L$ also has such a dense subgroup $D$.
Then the preimage of $D$ under the canonical homomorphism $H\to H/L$ provides a dense subgroup of $H$ which is not separable.
Similarly, this implies that $G$ also has a non-separable dense subgroup because $H=G/G_{n+1}$ is a quotient group of $G$.
\end{proof}

\begin{remark}
A topological group is called \emph{pronilpotent} if it is the  limit of an inverse system of finite discrete nilpotent groups \cite{RZ}, or equivalently, if it can be embedded into the direct product of finite discrete nilpotent groups as a closed subgroup.
On one hand, a product of finite nilpotent groups is countably nilpotent, and so are its subgroups. On the other hand, if $G$ is a profinite countably nilpotent group, then $G/G_n$ is nilpotent (and hence pronilpotent) and $G$ embeds into $\prod_{n\in \mathbb{N}}G/G_n$; thus $G$  itself is pronilpotent. Here, $G_n$ is again be the subgroup of $G$ defined at the beginning of this section. In summary, for profinite groups we have
$$\mbox{transfinite~nilpotent}=\mbox{countably~nilpotent}=\mbox{pronilpotent}.$$
\end{remark}
According to the remark above, one has the following.
\begin{proposition}Let $G$ be a profinite group with all dense subgroup separable. If $G$ is also pronilpotent, then $G$ is metrizable.\end{proposition}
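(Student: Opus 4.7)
The plan is essentially to invoke the preceding Corollary~\ref{Coro:Jan29} after identifying the class of groups we are dealing with. The statement concerns profinite pronilpotent groups, while Corollary~\ref{Coro:Jan29} is phrased for transfinite nilpotent profinite groups; the remark immediately preceding the proposition asserts that these two classes coincide within profinite groups, so the proof reduces to a single line.

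More concretely, I would first note that $G$ is profinite and pronilpotent by hypothesis. Appealing to the equivalence
\[
\text{transfinite nilpotent} = \text{countably nilpotent} = \text{pronilpotent}
\]
valid in the realm of profinite groups (established in the remark just above, using that a pronilpotent group embeds as a closed subgroup of a product of finite nilpotent groups, whose descending central series all terminate within finitely many steps), I would conclude that $G$ is transfinite nilpotent.

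With this observation, $G$ satisfies both hypotheses of Corollary~\ref{Coro:Jan29}: it is a transfinite nilpotent profinite group, and all its dense subgroups are separable. Applying the corollary directly yields that $G$ is metrizable, which completes the proof. There is no real obstacle here; the proposition is merely a rephrasing of Corollary~\ref{Coro:Jan29} under the equivalent terminology of pronilpotency, and the content of the argument is carried entirely by the remark and the corollary already established.
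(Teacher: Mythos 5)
Your proposal is correct and matches the paper's own argument exactly: the paper likewise derives the proposition in one step by citing the preceding remark (that transfinite nilpotent, countably nilpotent, and pronilpotent coincide for profinite groups) and then applying Corollary~\ref{Coro:Jan29}. No gaps.
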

For any prime number $p$, it is widely known that finite $p$-groups are nilpotent.
A standard argument then shows that pro-$p$ groups (i.e., projective limits of finite $p$-groups) are pronilpotent \cite{RZ}. This implies the following:
\begin{corollary} A pro-$p$ group with all dense subgroups separable is metrizable.\end{corollary}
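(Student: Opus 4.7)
The plan is to derive this as an almost immediate consequence of the preceding Proposition, which states that a pronilpotent profinite group with all dense subgroups separable is metrizable. The only thing to verify is that a pro-$p$ group falls into the hypothesis of that Proposition, i.e., that it is both profinite and pronilpotent.

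First I would recall that a pro-$p$ group $G$ is by definition a projective limit of finite $p$-groups, so in particular it is a projective limit of finite (discrete) groups, hence profinite. Next, since every finite $p$-group is nilpotent (a standard fact from elementary group theory, proved by induction on the order using that the centre of a nontrivial $p$-group is nontrivial), $G$ is in fact a projective limit of finite discrete nilpotent groups, and therefore pronilpotent in the sense of the definition recalled in the preceding remark. This observation is already explicitly noted in the paragraph immediately preceding the corollary, so it can simply be invoked.

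Having verified both hypotheses, I would conclude by directly applying the previous Proposition to $G$: since $G$ is a pronilpotent profinite group in which every dense subgroup is separable, the Proposition yields that $G$ is metrizable, which is precisely what needs to be shown.

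There is no real obstacle here; the content of the corollary is entirely contained in the preceding Proposition combined with the standard fact ``finite $p$-group $\Rightarrow$ nilpotent.'' The proof will therefore be essentially a single sentence.
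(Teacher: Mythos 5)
Your proposal is correct and follows exactly the paper's own route: the corollary is deduced from the preceding Proposition by observing that finite $p$-groups are nilpotent, so a pro-$p$ group is a profinite pronilpotent group. Nothing is missing.
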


\section*{Acknowledgements}
This paper has been improved a lot by a detailed report kindly provided by the referee. The author wants to express the acknowledgement.
I also would like to thank Professor Wei He for his numerous suggestions and thank V\'{\i}ctor Hugo Ya\~{n}ez for helping me improving the earlier version of this paper.

\end{document}